\newtheorem{theorem}{Theorem}[section]
\newtheorem{proposition}[theorem]{Proposition}
\newtheorem{corollary}[theorem]{Corollary}
\theoremstyle{definition}
\newtheorem{definition}[theorem]{Definition}
\newtheorem{example}[theorem] {Example}
\theoremstyle{remark}
\newtheorem{remark}[theorem]{Remark}
 \numberwithin{equation}{section}
\newcommand{\RR}{{\mathbb R}}
\newcommand{\MM}{{\mathcal M}}
\newcommand{\NN}{{\mathbb N}}
\newcommand{\CC}{{\mathbb C}}
\newcommand{\LL}{{\mathcal L}}
\begin{document}

\title[Elementary approach to Ces\`{a}ro asymptotics]{An elementary approach to asymptotic behavior  in the Ces\`{a}ro sense and applications to the Laplace and Stieltjes transforms}

\author[D. Nemzer]{Dennis Nemzer}
\address{Department of Mathematics,  California State University, Stanislaus, One University Circle, Turlock, CA  95382, USA}
\email{nemzer@comcast.net}
\thanks{Dedicated to Professor Bogoljub Stankovi\'{c} on the occasion of his 90th birthday.}

\author[J. Vindas]{Jasson Vindas}
\thanks{J. Vindas gratefully acknowledges support by Ghent University, through the BOF-grant 01N01014.}
\address{Department of Mathematics, Ghent University, Krijgslaan 281 Gebouw S22, 9000 Gent, Belgium}
\email{jvindas@cage.UGent.be}

\subjclass[2010]{Primary 41A60, 44A40. Secondary  44A10, 44A15, 46F10, 46F12}
\keywords{Asymptotic behavior of generalized functions; asymptotics in the Ces\`aro sense; Laplace transform; Stieltjes transform}

\begin{abstract} We present a elementary approach to asymptotic behavior of generalized functions in the Ces\`aro sense. Our approach is based on Yosida's subspace of Mikusi\'{n}ski operators. Applications to  Laplace and Stieltjes transforms are given.
\end{abstract}

\maketitle

\section{Introduction} 
Asymptotic analysis is a very much studied topic within generalized function theory and has shown to be quite useful for the understanding of structural properties of a generalized function in connection with its local behavior as well as its growth properties at infinity. Several applications have been developed in diverse areas such as Tauberian and Abelian theory for integral transforms, differential equations, number theory, and mathematical physics. There is a vast liteturature on the subject, see the monographs \cite{EstradaKanwal,ML,PilStanTak,PilStanVindas,vdz} and references therein.   

The purpose of this paper is to present an elementary approach to asymptotic behavior in the Ces\`{a}ro sense. The Ces\`{a}ro behavior for Schwartz distributions was introduced by Estrada in \cite{Estrada} (see also \cite{EstradaKanwal}). The approach we develop in this article uses Yosida's algebra $\mathcal{M}$ of operators \cite{yosida}, which provides a simplified but useful version of Mikusi\'{n}ski's operational calculus \cite{Mikusinski}. While Schwartz distribution theory is based on the duality theory of topological vector spaces, the construction of Yosida's space $\mathcal{M}$ is merely algebraic, making only use of elementary notions from calculus. That is why we call our approach to Ces\`{a}ro asymptotics \emph{elementary}.

The plan of the article is as follows. In Section \ref{preliminaries}, we recall the construction of Yosida's space $\mathcal{M}$. We study some useful localization properties of elements of $\mathcal{M}$ in Section \ref{localization}. The asymptotics in the Ces\`{a}ro sense is defined in Section \ref{cesaro} and its properties are investigated. As an application, we conclude the article with some Abelian and Tauberian theorems for Stieltjes and Laplace transforms in Section \ref{applications}. It should be mentioned that Abelian and Tauberian theorems for Stieltjes and Laplace transforms of generalized functions have been extensively investigated by several authors, see, e.g., \cite{Estrada-Vindas,ML,PilStanTak,PilStanVindas,vdz}.
\section{Preliminaries}
\label{preliminaries}

We recall in this section the construction of Yosida's space of operators $\mathcal{M}$ and explain some of its properties. See \cite{yosida} for more details about $\mathcal{M}$.

Let $C^{n}_+(\RR)$  denote the space of all $n$-times continuously differentiable functions on $\RR$ which vanish on the interval $(-\infty , 0)$. We write  $C_+(\RR)=C^{0}_+(\RR)$. For $f, g  \in C_+(\RR)$, the convolution is given by                        
$$(f \ast g)(x) =  \int_0^x f(x-t)g(t) dt.$$                                            

Let $H$ denote the Heaviside function.  That is, $H(x) = 1$ for $x \ge 0$ and zero otherwise.
For each $n \in \NN$, we denote by $H^n$ the function $H \ast \dots \ast H$ where $H$ is repeated $n$ times. One has $H^{n}\in C^{n-2}_{+}(\mathbb{R})$, $n\geq 2$.
Note that if $f \in C_+(\RR)$, then
$$(H^m \ast f)(x) = \frac{1}{(m-1)!} \int_0^x (x-t)^{m-1} f(t) dt.$$

The space $\MM$ is defined as follows,
$$\MM =  \left\{ \frac{f}{H^k} : f \in C_+(\RR),\: k \in \NN \right\}.$$
Two elements of $\MM$ are equal, denoted $\displaystyle \frac{f}{H^n} =  \frac{g}{H^m}$ , if and only if
$H^m \ast f = H^n \ast g$.
Addition, multiplication (using convolution), and scalar multiplication are defined in the natural way, and $\MM$ with these operations is a commutative algebra with identity $ \delta = \displaystyle\frac{H^2}{H^2}$, the Dirac delta. We can embed $C_{+}(\RR)$ into $\mathcal{M}$. Indeed, for $f \in C_+(\RR)$, we set $W_f= \displaystyle\frac{H \ast f}{H}$. Obviously, $f\mapsto W_{f}$ is injective.

Let $W = \displaystyle\frac{f}{H^k} \in \MM$. The generalized derivative of $W$ is defined as 
$
DW =\displaystyle\frac{f}{H^{k+1}}.$
The product of $x$ and $W$ is given by
$$  xW =   \frac{xf-kH \ast f}{H^k} \quad (k \ge 2).
$$ 
In the last formula $k\geq2$ is no restriction because $\displaystyle\frac{f}{H^{k}}=\frac{H\ast f}{H^{k+1}}$. Clearly, these definitions do not depend on the representative of $W$.

\begin{proposition} The generalized derivative and product by $x$ satisfy:
\begin{enumerate}
\item[(a)]  $xW_f = W_{xf}$, $f\in C_{+}(\RR)$.
\item[(b)]  $DW_f = W_{f'},$ $f \in C^1_+(\RR)$.
\item[(c)] $D(xW) = W + xDW, \,\, W \in \MM$.
\end{enumerate}
\end{proposition}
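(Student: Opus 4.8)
The plan is to verify the three identities directly from the definitions, reducing each to an elementary computation with convolution integrals. The one preliminary remark I will use repeatedly is the rescaling rule $\frac{f}{H^{k}}=\frac{H\ast f}{H^{k+1}}$, which allows me to represent any operator with denominator $H^{k}$, $k\geq 2$, so that the formula for multiplication by $x$ is applicable, and to compare operators via the defining relation that $\frac{g}{H^{n}}=\frac{h}{H^{m}}$ exactly when $H^{m}\ast g=H^{n}\ast h$.

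For (a), I would write $W_{f}=\frac{H\ast f}{H}=\frac{H^{2}\ast f}{H^{2}}$ and apply the definition of $xW$ with $k=2$ to get $xW_{f}=\frac{x(H^{2}\ast f)-2H^{3}\ast f}{H^{2}}$, while $W_{xf}=\frac{H^{2}\ast(xf)}{H^{2}}$. Thus it is enough to check the pointwise identity $x(H^{2}\ast f)(x)-2(H^{3}\ast f)(x)=(H^{2}\ast(xf))(x)$ for continuous functions, which after substituting the integral formulas for $H^{2}\ast f$ and $H^{3}\ast f$ reduces to the algebraic identity $x(x-t)-(x-t)^{2}=t(x-t)$ under the integral sign. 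For (b), I would note that $f\in C^{1}_{+}(\RR)$ forces $f(0)=0$, hence $(H\ast f')(x)=\int_{0}^{x}f'(t)\,dt=f(x)$, so that $W_{f'}=\frac{H\ast f'}{H}=\frac{f}{H}$; on the other hand $DW_{f}=\frac{H\ast f}{H^{2}}=\frac{f}{H}$ by the rescaling rule, and the two agree. For (c), which is purely algebraic, I would take $W=\frac{f}{H^{k}}$ with $k\geq 2$, so $DW=\frac{f}{H^{k+1}}$; applying the $x$-multiplication formula (valid since $k+1\geq 2$) gives $xDW=\frac{xf-(k+1)H\ast f}{H^{k+1}}$, and since $W=\frac{H\ast f}{H^{k+1}}$ we obtain $W+xDW=\frac{xf-kH\ast f}{H^{k+1}}$, which is exactly $D(xW)$ because $xW=\frac{xf-kH\ast f}{H^{k}}$.

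I do not expect a genuine obstacle here; the proofs are short manipulations. The only points demanding care are choosing a representative whose denominator exponent is at least $2$ before invoking the multiplication-by-$x$ formula, and, in (a) and (b), correctly translating an identity of operators into the corresponding convolution identity of continuous functions. Independence of all these constructions from the chosen representative is already asserted in the text, so it does not need to be re-established.
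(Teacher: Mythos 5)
Your proposal is correct and follows essentially the same route as the paper: direct verification from the definitions, using the rescaling $\frac{f}{H^{k}}=\frac{H\ast f}{H^{k+1}}$ to make the $x$-multiplication formula applicable, with (b) and (c) matching the paper's computations almost verbatim (you usefully make explicit the point $f(0)=0$ behind $H\ast f'=f$). The only cosmetic difference is in (a), where the paper splits $x(H^{2}\ast f)=2H^{3}\ast f+H^{2}\ast(xf)$ while you verify the same identity directly under the integral sign via $x(x-t)-(x-t)^{2}=t(x-t)$; both are the same elementary computation.
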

\begin{proof} For (a),
 \begin{align*} 
  xW_f &= \frac{x(H^2 \ast f)}{H^2} - \frac{2H^2 \ast f}{H}
= \frac{xH^2 \ast f}{H^2} + \frac{H^2 \ast xf}{H^2} - \frac{2H^2 \ast f}{H}\\
&=
\frac{2H^3 \ast f}{H^2} + \frac{H^2 \ast xf}{H^2} - \frac{2H^2 \ast f}{H}\\
&
= \frac{H \ast xf}{H} \quad = \quad W_{xf}\:.
\end{align*}
If $f'\in C_{+}(\RR)$, we have $\displaystyle DW_f = \frac{H \ast f}{H^2} = \frac{f}{H} = \frac{H \ast f'}{H} = W_{f'},$ which shows (b). Next, let $\displaystyle W = \frac{f}{H^k} \in \MM$.  Then,
\begin{equation*}
W + xDW = \frac{f}{H^k} + \left( \frac{xf}{H^{k+1}} - \frac{(k+1)f}{H^k} \right)
= \frac{xf}{H^{k+1}} - \frac{kf}{H^k} = D(xW). 
 \end{equation*} 
\end{proof}

\begin{remark}  Notice by identifying  $f \in L^1_{loc}(\RR^+)$ with $\displaystyle\frac{H \ast f}{H} \in \MM$, the space $L^1_{loc}(\RR^+)$  can be considered a subspace of $\MM$. Also, for the construction of $\MM$, the space of locally integrable functions which vanish on $(-\infty , 0)$ could have been used instead of $C_+(\RR)$.
\end{remark}

\section{Localization}
\label{localization}

We discuss in this section  localization properties of elements of $\mathcal{M}$.

\begin{definition} Let $W = \displaystyle \frac{f}{H^k} \in \MM$.  $W$ is said to vanish on an open interval $(a, b)$, denoted $W(x) = 0$ on $(a, b)$, provided there exists a polynomial $p$ with degree at most $k-1$ such that $p(x) = f(x)$ for $a < x < b$. Two elements $W, V \in \MM$ are said to be equal on 
$(a, b)$, denoted $W(x) = V(x)$ on $(a, b)$, provided $W - V$ vanishes on $(a, b)$. 
\end{definition}

The support of $W \in \MM$, denoted supp$\,W$, is the complement of the largest open set on which $W$ vanishes. The degree of a polynomial $p$ will be denoted by $\operatorname*{deg} p$ in the sequel.

\begin{example}  Recall $ \delta = \displaystyle\frac{H^2}{H^2}$.  Notice that $H^2(x) = x$ on the open interval $(0, \infty)$.  Thus, $\delta(x) = 0$ on $(0, \infty)$.  Also, $H^2(x) = 0$ on $(-\infty, 0)$.  So, $\delta(x) = 0$ on $(-\infty, 0)$.  Therefore, $\operatorname*{supp}\delta = \{0\}$.
\end{example}

\begin{proposition}Let $W \in \MM$.
\begin{enumerate}
\item[(a)]  If $W(x) = 0$ on $(a,b)$, then $DW(x) = 0$ on $(a,b)$.
\item[(b)]  If $DW(x)=0$ on $(a,b)$, then $W$ is constant on $(a,b)$.
\end{enumerate}
\end{proposition}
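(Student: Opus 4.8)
The plan is to unwind both statements directly from the definition of vanishing on an interval, using the concrete formula for $D$ on a representative $W = f/H^k$.

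For part (a), suppose $W(x) = 0$ on $(a,b)$, so there is a polynomial $p$ with $\deg p \le k-1$ and $f(x) = p(x)$ for $a < x < b$. By definition $DW = f/H^{k+1}$, so I need a polynomial $q$ with $\deg q \le k$ agreeing with $f$ on $(a,b)$; but $p$ itself works, since $\deg p \le k - 1 \le (k+1) - 1$. So part (a) is essentially immediate — the degree bound is slack enough that the same polynomial witnesses the vanishing of $DW$. The only thing to keep in mind is that $\delta$-type subtleties don't arise because we are allowed to raise the denominator.

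Part (b) is the substantive one. Assume $DW(x) = 0$ on $(a,b)$. Writing $W = f/H^k$, we have $DW = f/H^{k+1}$, so there is a polynomial $q$ with $\deg q \le k$ and $f(x) = q(x)$ for $a < x < b$. I want to show $W$ is constant on $(a,b)$, i.e. that $f$ agrees with a polynomial of degree $\le k-1$ on $(a,b)$ \emph{after passing to a suitable representative} — the point being that "$W$ constant on $(a,b)$" should be interpreted as $W(x) = cW_1(x)$ on $(a,b)$ for a scalar $c$, where $W_1$ is the element representing the constant function $1$ (equivalently $W - c\,W_{1}$ vanishes there, with $W_1 = (H\ast 1)/H = $ the class of $H$). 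Since on $(0,\infty)$ the embedded constant function $1$ is represented, after raising denominators appropriately, by a polynomial that is genuinely of degree one less than the ambient denominator exponent, the claim reduces to: if $f$ equals a degree-$\le k$ polynomial $q$ on $(a,b)$, and $q$ has a nonzero leading coefficient of degree exactly $k$, then this is the "constant" contribution. Concretely I would argue that the degree-$k$ part of $q$ corresponds precisely to a nonzero constant in $\MM$ (because $H^{k+1} \ast \delta$-type bookkeeping shows that the class of $x^{k}$ in $f/H^{k+1}$ coincides with a multiple of the class of the constant $1$), while the lower-degree terms of $q$ mean $f$ agrees with a degree $\le k-1$ polynomial on $(a,b)$ up to that constant. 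Hence $W - c\,W_1$ vanishes on $(a,b)$ for the appropriate $c$.

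The main obstacle will be bookkeeping the exact correspondence between "polynomial of degree $k$ in a representative over $H^{k+1}$" and "scalar multiple of the constant function $1$ in $\MM$", i.e. verifying that the top-degree monomial $x^k/H^{k+1}$ really is $\frac{1}{k!}W_1$ (or the appropriate constant) rather than something spurious, and making sure the notion of "$W$ is constant on $(a,b)$" is the one intended. Once that identification is pinned down — most cleanly by computing $H^{k+1}\ast 1$ and comparing with $H \ast$(the degree-$k$ polynomial), or by differentiating and using part (a) together with the fact that an element of $\MM$ whose derivative vanishes identically on an interval and which is itself represented by a polynomial must have that polynomial of controlled degree — the rest is routine. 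A clean alternative for (b): from $DW(x)=0$ on $(a,b)$ one gets $W = f/H^{k+1}$ with $f = q$ on $(a,b)$, $\deg q \le k$; write $q(x) = c_{k}x^{k} + r(x)$ with $\deg r \le k-1$; then $W(x) = \frac{c_k x^k}{H^{k+1}}(x) + \frac{r(x)}{H^{k+1}}$, the second term vanishes on $(a,b)$ by definition, and the first term is a constant in $\MM$, finishing the proof.
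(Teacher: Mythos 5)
Part (a) of your argument is fine and is exactly the paper's ``immediate from definitions'' observation. For part (b) the paper offers no argument at all (it cites \cite[Thm. 4.1]{Nem}), so a direct proof along your lines would be a welcome, self-contained alternative --- but as written it fails at precisely the step you single out as the crux. The identification ``the class of $x^{k}$ over $H^{k+1}$ is a multiple of the constant $1$'' is false: since $x^{k}H(x) = k!\,H^{k+1}(x)$ and $H^{k+1}/H^{k+1} = \delta$, one has
\[
\frac{x^{k}H(x)}{H^{k+1}} \;=\; k!\,\delta,
\]
a multiple of the Dirac delta, not of the Heaviside class $W_{1} = H^{2}/H = H^{k+1}/H^{k}$; over the denominator $H^{k+1}$ the constant is represented by the monomial $x^{k+1}$, not $x^{k}$. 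Relatedly, your ``clean alternative'' is internally inconsistent: if $W = f/H^{k+1}$ then $DW = f/H^{k+2}$ and its vanishing only gives $\deg q \le k+1$, whereas with your stated bound $\deg q \le k$ the element $f/H^{k+1}$ would already \emph{vanish} on $(a,b)$ --- a conclusion that is both too strong and not implied by the hypothesis. There is an off-by-one in the power of $H$, and it lands exactly on the distinction between ``constant'' and ``multiple of $\delta$''.

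The repair is to keep your first (correct) bookkeeping and never raise the denominator: $W = f/H^{k}$, $DW = f/H^{k+1}$, so the hypothesis gives $f = q$ on $(a,b)$ with $\deg q \le k$; write $q = c_{k}x^{k} + r$, $\deg r \le k-1$. If $a \ge 0$, then, using $H^{k+1}(x) = x^{k}/k!$ for $x \ge 0$,
\[
W - k!\,c_{k}\,W_{1} \;=\; \frac{f - k!\,c_{k}\,H^{k+1}}{H^{k}},
\]
and the numerator equals $r$ on $(a,b)$, so $W - k!\,c_{k}\,W_{1}$ vanishes there and $W$ equals the constant $k!\,c_{k}$ on $(a,b)$. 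This formulation also avoids the minor problem that $x^{k}$ and $r$ are not elements of $C_{+}(\RR)$, so your global splitting of $W$ into two classes is not literally an identity in $\MM$, only an equality on $(a,b)$. If $a < 0$, then $q$ vanishes on $(a,0)$ because $f$ does, hence $q \equiv 0$ and $W$ itself vanishes on $(a,b)$ (constant $0$). With these corrections your argument is complete and, unlike the paper's, self-contained; the interpretation of ``constant on $(a,b)$'' as ``equal to $cW_{1}$ on $(a,b)$'' is the natural one here and matches the cited source.
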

\begin{proof} Part (a) follows immediately from definitions. See \cite[Thm. 4.1]{Nem} for (b). 
\end{proof}

\begin{proposition} \label{WOnInterval}  Let $W \in \MM$.
\begin{enumerate}
\item[(a)]  If $W(x) = 0$ on $(a,b)$, then $xW(x) = 0$ on $(a,b)$.
\item[(b)] Suppose $xW(x) = 0$ on $(a,b)$.  Then, 
\begin{enumerate}
\item[(i)]  $W(x) = 0$ on $(a,b)$,  provided $0 \notin (a,b)$.
\item[(ii)] $W(x) = 0$ on $(a,0) \cup (0,b)$, provided $0 \in (a,b)$.
\end{enumerate}
\end{enumerate}
\end{proposition}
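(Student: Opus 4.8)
The plan is to reduce both parts to elementary facts about polynomials together with a first-order linear ODE. Throughout, write $W = f/H^{k}$ and assume without loss of generality that $k \geq 2$; this is legitimate, since replacing the representative by $(H\ast f)/H^{k+1}$ changes neither the hypotheses nor the conclusions (if $f$ agrees with a polynomial of degree $\le k-1$ on an interval, then $H\ast f$ agrees there with a polynomial of degree $\le k$, and conversely). Set $F := H\ast f \in C^{1}(\RR)$, so that $F' = f$, $F(x)=0$ for $x\le 0$, and, by the formula for the product by $x$, $xW = (xf - kF)/H^{k}$.

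For part (a), suppose $f(x) = p(x)$ on $(a,b)$ with $\deg p \le k-1$. Since $F' = f = p$ on the interval $(a,b)$, there is a constant $c$ with $F(x) = P(x) + c$ on $(a,b)$, where $P$ is the antiderivative of $p$ vanishing at $0$; note $\deg P \le k$. Hence on $(a,b)$,
$$xf(x) - kF(x) = xp(x) - kP(x) - kc =: q(x),$$
a polynomial. The only point requiring care is its degree: writing $p(x) = \sum_{j=0}^{k-1} a_{j} x^{j}$, the coefficient of $x^{j+1}$ in $xp - kP$ is $a_{j}\bigl(1 - \tfrac{k}{j+1}\bigr)$, which vanishes precisely when $j = k-1$; thus $\deg q \le k-1$. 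By definition this means $xW(x) = (xf - kF)/H^{k} = 0$ on $(a,b)$.

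For part (b), suppose $xW(x) = 0$ on $(a,b)$, i.e. $xf(x) - kF(x) = q(x)$ on $(a,b)$ for some polynomial $q$ with $\deg q \le k-1$. Since $f = F'$, the function $F$ solves the linear ODE $xF'(x) - kF(x) = q(x)$ on $(a,b)$. On any open subinterval $I$ of $(a,b)$ with $0 \notin I$ we may divide by $x^{k+1}$ and rewrite this as $\frac{d}{dx}\bigl(F(x)/x^{k}\bigr) = q(x)/x^{k+1}$. The crucial observation is that, because $\deg q \le k-1$, the exponents appearing in $q(x)/x^{k+1}$ range over $\{-k-1,\dots,-2\}$, so $-1$ does not occur; hence an antiderivative of $q(x)/x^{k+1}$ is again a Laurent polynomial with exponents in $\{-k,\dots,-1\}$, with no logarithmic term. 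Consequently $F(x)/x^{k} = C + \sum_{i=1}^{k} c_{i} x^{-i}$ on $I$, i.e. $F$ is a polynomial of degree $\le k$ on $I$, and therefore $f = F'$ is a polynomial of degree $\le k-1$ on $I$. For (i), when $0 \notin (a,b)$ take $I = (a,b)$ to conclude $W(x) = 0$ on $(a,b)$. For (ii), when $0 \in (a,b)$ apply this to $I = (0,b)$ to get $W(x) = 0$ on $(0,b)$, while on $(a,0)$ we have $f \equiv 0$ (as $f$ vanishes on $(-\infty,0)$), so trivially $W(x) = 0$ on $(a,0)$; hence $W(x) = 0$ on $(a,0) \cup (0,b)$.

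I expect the main obstacle to be the degree bookkeeping rather than anything conceptual: in (a) one must notice the cancellation of the top-degree term, so that multiplication by $x$ does not push the local polynomial out of the admissible class; in (b) one must treat $x=0$ as a singular point of the ODE and check that integrating $q(x)/x^{k+1}$ produces no logarithm — both are automatic once the inequality $\deg q \le k-1$ (equivalently, the defining condition for vanishing) is used carefully. A secondary point is that ``$W(x) = 0$ on $(a,0)\cup(0,b)$'' must be read componentwise, consistently with the definition of support, and that the conclusion of (ii) genuinely cannot be strengthened to all of $(a,b)$, as the example $W=\delta$ shows: $x\delta = 0$ on $\RR$ while $\operatorname{supp}\delta = \{0\}$.
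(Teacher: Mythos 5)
Your proof is correct and follows essentially the same route as the paper: part (a) is the same direct polynomial computation (with the top-degree cancellation), and part (b) rests on the same first-order Euler-type ODE solved away from the singular point $x=0$, with the case $0\in(a,b)$ handled by restricting to $(0,b)$ and using that everything vanishes on $(a,0)$. The only minor difference is that you integrate at the level of $F=H\ast f$ with the integrating factor $x^{-k}$ (which avoids the paper's step of first upgrading $f$ to $C^1(a,b)$ and makes the absence of a logarithmic term explicit), whereas the paper differentiates the identity to get $xf'(x)+(1-k)f(x)=p'(x)$ and solves for $f$ directly.
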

\begin{proof} Let $W =\displaystyle \frac{f}{H^k} \in \MM$ ($k \ge 2$).

\noindent\emph{Part (a).} Since $W(x) = 0$ on $(a,b)$, there exist $a_0, a_1, \dots, a_{k-1} \in \CC$ such that $f(x) = a_0 + a_1x + \dots + a_{k-1}x^{k-1}$ for $a<x<b$.  Now, there exists $A \in \CC$ such that 
$$
xf(x) - k(H \ast f)(x) = A + a_0(1-k)x + a_1\left(1-\frac{k}{2}\right)x^2 + \dots + a_{k-2}\left(1-\frac{k}{k-1}\right)x^{k-1},
$$ 
for $a<x<b$. Since $\displaystyle xW = \frac{xf-kH\ast f}{H^k}$, the above yields $xW(x) = 0$ on $(a,b)$.

\noindent\emph{Part (b)}. Suppose  $xW(x) = 0$ on $(a,b)$.
\begin{enumerate}
\item[(i)]  If $a<0$, then the conclusion is clearly true.  So assume $a \ge 0$.
Now, there exists a polynomial $p$ with $\deg p \le k-1$ such that 
$$xf(x) - k\int_0^xf(t)dt = p(x) \quad \mbox{ for } a<x<b.$$
Thus, $f \in C^1(a,b)$ and $xf'(x) + (1-k)f(x) = p'(x)$ for $a<x<b.$ Solving this differential equation,
it follows that $f(x) = q(x)$ for  $a<x<b,$ where $q$ is a polynomial with $\deg q \le k-1.$ Therefore, $W(x) = 0$ on $(a,b)$. 
\item[(ii)]  Since for all $W \in \MM$, we have $W(x) = 0$ on $(-\infty, 0)$, we only need to show that $W(x) = 0$ on $(0,b)$.  Since $0 \in (a,b)$, one has 
\begin{equation}
\label{eq}
xf(x) - k\int_0^x f(t)dt = 0
\end{equation}
for $a<x<b$, in particular, for $0<x<b$. Similarly as in the proof of part (i), we obtain 
$f(x) = Ax^{k-1}$ for $0<x<b,$ where  $A \in \CC.$
Thus, $W(x) = 0$ on $(0,b)$. Therefore, $W(x) = 0$ on $(a,0) \cup (0,b)$.  
\end{enumerate}
\end{proof}

\begin{proposition}  Let $W \in \MM$.
\begin{enumerate}
\item[(a)] If $xW(x) = 0$ on $(-\infty, \infty)$, then $W = \alpha \delta$ for some $\alpha \in \CC$.
\item[(b)] If $W = \displaystyle \frac{f}{H^k} \,\, (k \ge 2)$ and $W(x) = 0$ on $(0, \infty)$, then $W = \sum_{n=0}^{k-2} \alpha_n \delta^{(n)}$, for some $\alpha_n \in \CC,$ $n = 0, 1, 2, \dots, k-2$.
\end{enumerate}
\end{proposition}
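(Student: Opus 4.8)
The plan is to establish (b) first, by writing $W$ out explicitly, and then to obtain (a) by reducing the hypothesis $xW=0$ to the kind of elementary differential equation already solved in the proof of Proposition \ref{WOnInterval}.

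\emph{Part (b).} By hypothesis $W=f/H^{k}$ with $k\ge 2$, and $W(x)=0$ on $(0,\infty)$ means that there is a polynomial $p$ with $\deg p\le k-1$ such that $f(x)=p(x)$ for $x>0$. Since $f\in C_{+}(\RR)$ is continuous and vanishes on $(-\infty,0)$, we have $f(0)=0$, hence $p(0)=0$; thus $p(x)=\sum_{j=1}^{k-1}c_{j}x^{j}$ for suitable $c_{j}\in\CC$. Using the elementary identity $x^{j}H(x)=j!\,H^{j+1}(x)$, the continuous functions $f$ and $\sum_{j=1}^{k-1}c_{j}\,j!\,H^{j+1}$ agree on $(0,\infty)$ and both vanish on $(-\infty,0)$, so they coincide on $\RR$; therefore $W=\sum_{j=1}^{k-1}c_{j}\,j!\,\bigl(H^{j+1}/H^{k}\bigr)$. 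Finally, the definitions of $D$ and $\delta$ together with the equality relation of $\MM$ give $H^{j+1}/H^{k}=\delta^{(k-1-j)}$ (both sides equal $H^{2}/H^{k+1-j}$), and reindexing with $n=k-1-j$ yields $W=\sum_{n=0}^{k-2}\alpha_{n}\delta^{(n)}$ with $\alpha_{n}=(k-1-n)!\,c_{k-1-n}$.

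\emph{Part (a).} Using $f/H^{k}=(H\ast f)/H^{k+1}$ we may assume $k\ge 2$. Since $xW=(xf-kH\ast f)/H^{k}$ vanishes on $(-\infty,\infty)$, there is a polynomial $p$ with $\deg p\le k-1$ such that $xf(x)-k\int_{0}^{x}f(t)\,dt=p(x)$ for every $x\in\RR$; but the left-hand side vanishes on $(-\infty,0)$, so $p\equiv 0$ and $xf(x)=k\int_{0}^{x}f(t)\,dt$ on $\RR$. Arguing as in the proof of Proposition \ref{WOnInterval}(b)(ii), this forces $f\in C^{1}(0,\infty)$ and $xf'(x)=(k-1)f(x)$ for $x>0$, hence $f(x)=Ax^{k-1}$ on $(0,\infty)$ for some $A\in\CC$, and by continuity at $0$ the same holds on $[0,\infty)$. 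Therefore $f=A(k-1)!\,H^{k}$ in $C_{+}(\RR)$, so $W=A(k-1)!\,\delta$.

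There is no deep obstacle here; the points that need attention are the observation that continuity forces the constant term of $p$ to vanish in part (b), and the translation between the representation $f/H^{k}$ and the combination $\sum\alpha_{n}\delta^{(n)}$, i.e., the identities $H^{j+1}/H^{k}=\delta^{(k-1-j)}$ and the reindexing — routine, but easy to slip on. One can also obtain (a) without the differential equation: Proposition \ref{WOnInterval}(b), applied on the intervals $(-N,N)$, shows $\operatorname{supp}W\subseteq\{0\}$, so $W(x)=0$ on $(0,\infty)$ and (b) gives $W=\sum_{n=0}^{k-2}\alpha_{n}\delta^{(n)}$; from the identity $D(xV)=V+xDV$ of Section \ref{preliminaries} one gets $x\delta^{(n)}=-n\,\delta^{(n-1)}$ by induction (the base case $x\delta=0$ being immediate), so that $0=xW=-\sum_{n=1}^{k-2}n\alpha_{n}\delta^{(n-1)}$, and the linear independence of $\delta,\delta',\dots,\delta^{(k-2)}$ in $\MM$ (seen by placing them over the common denominator $H^{k}$ and comparing polynomial degrees) forces $\alpha_{1}=\dots=\alpha_{k-2}=0$, i.e., $W=\alpha_{0}\delta$.
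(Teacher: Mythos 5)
Your proof is correct and follows essentially the same route as the paper: part (b) by writing $f$ as a polynomial with vanishing constant term on $(0,\infty)$ and identifying $H^{j+1}/H^{k}$ with $\delta^{(k-1-j)}$, and part (a) by observing the polynomial must vanish identically and reducing $xf(x)=k\int_{0}^{x}f(t)\,dt$ to the first-order equation already solved in Proposition \ref{WOnInterval}, giving $f(x)=Ax^{k-1}$ and hence $W=A(k-1)!\,\delta$. The additional derivation of (a) from (b) via $x\delta^{(n)}=-n\,\delta^{(n-1)}$ and linear independence of the $\delta^{(n)}$ is a correct bonus, but not needed.
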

\begin{proof}
Let $W =\displaystyle \frac{f}{H^k} \, \, (k \ge 2)$.  
 
\noindent \emph{Part (a)}. Suppose $xW(x) =0$ on $(-\infty, \infty)$. Similarly as in the proof of Proposition \ref{WOnInterval}(b), we obtain (\ref{eq}) on $(-\infty, \infty)$
It follows that $f(x)= \beta x^{k-1}$ on  $(0, \infty)$, for some $\beta \in \CC.$ Since $f$ is continuous on $\RR$ and vanishes on $(-\infty, 0)$, we have
$f(x) = \beta x^{k-1} \,\, \mbox{ on } [0, \infty).$
Therefore,
$$W = \frac{f}{H^k} = (k-1)! \, \beta \, \frac{H^k}{H^k} = \alpha \delta \quad \mbox{where } \alpha = (k-1)! \beta.$$

\noindent\emph{Part (b)}. Suppose $W(x) = 0$ on $(0,\infty)$.  Then, $f(x) = a_0 + a_1 x + \dots + a_{k-1} x^{k-1}$ on $(0, \infty)$, where $a_0, a_1, \dots, a_{k-1} \in \CC$.  Since $f$ is continuous and vanishes on $(-\infty,0),$ we obtain $a_0 = 0$.  Thus,
\begin{align*}
W & = a_1 \frac{H^2}{H^k} + 2! a_2\, \frac{H^3}{H^k} + \dots + (k-1)!\, a_{k-1}\,\frac{H^k}{H^k}
\\
& = a_1\, \delta^{(k-2)} + 2!\, a_2\, \delta^{(k-1)} + \dots + (k-1)!\, a_{k-1}\, \delta. 
\end{align*}
\end{proof}

\section{Asymptotics in the Ces\`aro sense}
\label{cesaro}
We now introduce and study asymptotics in the Ces\`{a}ro sense for elements of $\mathcal{M}$. As usual, $\Gamma$ stands for the Euler Gamma function and $o$ stands for the little $o$ growth order symbol of Landau \cite[Chap. 1]{EstradaKanwal}.

\begin{definition}\label{def3.1}  Let $W \in \MM$.
For $\alpha \in \RR\backslash\{-1, -2, \dots\}$, define
$$W(x) \sim \frac{\gamma \, x^\alpha}{\Gamma(\alpha + 1)} \quad (C), $$
if and only if there is $k\in\mathbb{N}$ such that $W = \displaystyle \frac{f}{H^k}$ with
\begin{equation}\label{eqC1}
 \frac{\Gamma(\alpha + k +1) f(x)-p(x)}{x^{\alpha + k}} \to \gamma, \quad x \to \infty,
\end{equation}
where $p$ is some polynomial with $\deg p \le k-1$.  That is, 
$$\Gamma(\alpha + k +1) f(x) = p(x) + \gamma x^{\alpha + k} + o(x^{\alpha + k}), \quad \mbox{ as }  x \to \infty.$$
\end{definition}

\begin{remark} If $\alpha >-1$, then the polynomial $p$ is not needed.
\end{remark}

The next theorem tells us that Definition \ref{def3.1} is consistent with the choice of representatives.

\begin{theorem}\label{aclaim}   If $W = \displaystyle \frac{f}{H^k} \in \MM$ is such that (\ref{eqC1}) holds, then the continuous function $H^{m}\ast f$ in the representation $W=\displaystyle\frac{H^m \ast f}{H^{k+m}} \quad (m \in  \NN)$ satisfies
\begin{equation*}\label{eqC2}
 \frac{\Gamma(\alpha + k+m +1) (H^{m}\ast f)(x)-q(x)}{x^{\alpha + k+m}} \to \gamma, \quad x \to \infty,
\end{equation*}
for some polynomial $q$ of degree at most $k+m-1.$
\end{theorem}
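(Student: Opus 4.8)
The plan is to reduce the statement to the case $m=1$ and iterate. The engine is a ``one-step'' claim: if $g\in C_{+}(\RR)$ and $\Gamma(\beta+1)g(x)=p_{1}(x)+\gamma x^{\beta}+o(x^{\beta})$ as $x\to\infty$ for some polynomial $p_{1}$ with $\deg p_{1}\le N$ and some $\beta\notin\{-1,-2,\dots\}$, then $(H\ast g)(x)=\int_{0}^{x}g(t)\,dt$ satisfies $\Gamma(\beta+2)(H\ast g)(x)=p_{2}(x)+\gamma x^{\beta+1}+o(x^{\beta+1})$ for some polynomial $p_{2}$ with $\deg p_{2}\le N+1$. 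Granting this, Theorem~\ref{aclaim} follows by induction on $m$: the base case $m=0$ is the hypothesis \eqref{eqC1}, and the inductive step is the one-step claim applied to $g=H^{m}\ast f\in C_{+}(\RR)$ with $\beta=\alpha+k+m$ and $N=k+m-1$, using $H^{m+1}\ast f=H\ast(H^{m}\ast f)$. The parameters stay in range because $\alpha\notin\{-1,-2,\dots\}$ and $k\ge 1$ force $\alpha+k+m\notin\{-1,-2,\dots\}$, so $\Gamma(\alpha+k+m+1)$ is finite and nonzero for every $m\ge 0$; and the degree bound is preserved ($N\mapsto N+1$), which is exactly what Definition~\ref{def3.1} asks of the representative $H^{m+1}\ast f/H^{k+m+1}$. (A direct treatment of general $m$ via the explicit value of $H^{m}\ast t^{\beta}$ is possible when $\alpha+k>-1$, but the iterative scheme avoids the integrability issues at the origin that otherwise arise.)

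To prove the one-step claim, fix $x_{0}>0$ so large that $\Gamma(\beta+1)g(t)=p_{1}(t)+\gamma t^{\beta}+r(t)$ for all $t\ge x_{0}$, where $r$ is continuous on $[x_{0},\infty)$ and $r(t)=o(t^{\beta})$. For $x>x_{0}$ write $\int_{0}^{x}=\int_{0}^{x_{0}}+\int_{x_{0}}^{x}$: the piece $\Gamma(\beta+1)\int_{0}^{x_{0}}g$ is a constant, $\int_{x_{0}}^{x}p_{1}(t)\,dt=P_{1}(x)-P_{1}(x_{0})$ with $P_{1}$ a polynomial of degree $\le N+1$, and $\gamma\int_{x_{0}}^{x}t^{\beta}\,dt=\frac{\gamma}{\beta+1}(x^{\beta+1}-x_{0}^{\beta+1})$ since $\beta+1\neq 0$. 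Multiplying the resulting identity by $\Gamma(\beta+2)/\Gamma(\beta+1)=\beta+1$ gives $\Gamma(\beta+2)(H\ast g)(x)=p_{2}(x)+\gamma x^{\beta+1}+(\beta+1)\int_{x_{0}}^{x}r(t)\,dt$, where $p_{2}$ collects all polynomial and constant contributions and has degree $\le N+1$. Everything reduces to showing $\int_{x_{0}}^{x}r(t)\,dt=o(x^{\beta+1})$.

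This is the only subtle point, and it splits according to the sign of $\beta+1$. If $\beta+1>0$: given $\eta>0$, choose $x_{\eta}\ge x_{0}$ with $|r(t)|\le\eta t^{\beta}$ for $t\ge x_{\eta}$; then $\bigl|\int_{x_{0}}^{x}r\bigr|\le C_{\eta}+\frac{\eta}{\beta+1}x^{\beta+1}$, so $\limsup_{x\to\infty}x^{-(\beta+1)}\bigl|\int_{x_{0}}^{x}r\bigr|\le\eta/(\beta+1)$, and letting $\eta\downarrow 0$ finishes it. If $\beta+1<0$: then $t^{\beta}$ is integrable near $+\infty$ and $|r(t)|\le Mt^{\beta}$ on $[x_{0},\infty)$ (because $r(t)/t^{\beta}$ is continuous there and tends to $0$), so $L:=\int_{x_{0}}^{\infty}r(t)\,dt$ converges absolutely; we move the constant $(\beta+1)L$ into $p_{2}$ and are left with the tail $-(\beta+1)\int_{x}^{\infty}r(t)\,dt$, which for $x\ge x_{\eta}$ is bounded by $|\beta+1|\,\eta\int_{x}^{\infty}t^{\beta}\,dt=\eta\,x^{\beta+1}$, hence $o(x^{\beta+1})$. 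The value $\beta+1=0$ does not occur.

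I expect this sign dichotomy to be the main obstacle. When $\beta+1<0$ the integral $\int_{x_{0}}^{x}r$ is merely bounded, not small, so $r(t)=o(t^{\beta})$ does not by itself give the estimate; the remedy is to split $\int_{x_{0}}^{x}=\int_{x_{0}}^{\infty}-\int_{x}^{\infty}$ and absorb the convergent constant into $p_{2}$, which is harmless because any constant is a polynomial of degree $0\le N+1$. The remainder is routine bookkeeping: antiderivatives of polynomials raise the degree by one, and $\Gamma(\beta+2)=(\beta+1)\Gamma(\beta+1)$ is precisely the factor that turns $\frac{\gamma}{\beta+1}x^{\beta+1}$ back into $\gamma x^{\beta+1}$, so the coefficient $\gamma$ is unchanged at each step.
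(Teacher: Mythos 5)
Your proposal is correct and follows essentially the same route as the paper: reduce to a single integration step ($m=1$) and induct, splitting on the sign of $\alpha+k+1$ and, in the integrable case, absorbing the convergent tail constant into the polynomial. The only difference is cosmetic — you evaluate the limits by direct $\eta$-estimates instead of L'Hospital's rule, which has the small advantage of treating $\gamma=0$ uniformly rather than as a separate cited case.
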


\begin{proof} Suppose that (\ref{eqC1}) holds
for some polynomial $p$ with $\deg p \le k-1$. Consider
$$ \Gamma(\alpha +k+2) \int_0^x \left( f(t) - \frac{p(t)}{\Gamma(\alpha + k + 1)} \right)dt + K,$$
with $K$ a constant to be determined.

\noindent
\underline{Case 1.} Suppose $\alpha + k < -1 \Rightarrow x^{\alpha+k} \in L^1(1, \infty)$.
So, (4.1) implies $ f- p/\Gamma(\alpha+k+1)\in L^1(0, \infty)$. Therefore there exists a constant $K$ such that 
$$\Gamma(\alpha + k + 2) \int_0^x \left(f(t) - \frac{p(t)}{\Gamma(\alpha + k + 1)} \right)dt + K \to 0, \quad \mbox{ as } x \to \infty.$$
Thus, using L'Hospital's rule and (\ref{eqC1})

$$\displaystyle\lim_{x \to \infty} \frac{\Gamma(\alpha + k + 2) \int_0^x \left(f(t) - \frac{p(t)}{\Gamma(\alpha + k + 1)} \right)dt + K}{x^{\alpha + k + 1}} 
= \lim_{x \to \infty} \frac{\Gamma(\alpha + k + 1)f(x) - p(x)}{x^{\alpha + k}} = \gamma.$$
Let $q(x) = (\alpha + k+1)\int_0^x p(t)dt - K.$  Then $q$ is a polynomial with $\deg q \le k$. Moreover,
$$\frac{\Gamma(\alpha + k + 2)(H \ast f)(x) - q(x)}{x^{\alpha + k + 1}} \to \gamma, \quad \mbox{ as } x \to \infty.$$
\noindent \underline{Case 2.}  Suppose $\alpha + k > -1.$ Assume $\gamma \ne 0$. Then (\ref{eqC1})  implies
$$ \int_0^x \left(\Gamma(\alpha + k + 1) f(t) - p(t) \right)dt \to \pm \infty, \quad \mbox{ as } x \to \infty.$$
Using L'Hospital's Rule and (\ref{eqC1}),
$$\frac{\Gamma(\alpha + k + 2) \int_0^x\left( f(t) - \frac{p(t)}{\Gamma(\alpha+k + 1)} \right)dt}{x^{\alpha + k + 1}} = \frac{(\alpha + k + 1) \int_0^x (\Gamma(\alpha+k+1)f(t)-p(t))}{x^{\alpha+k+1}} \to \gamma,$$
as  $x \to \infty.$ For the case $\gamma = 0$ (and $\alpha + k > -1)$, we set $g(x) = \Gamma(\alpha + k + 1)f(x) - p(x)$. It is well known that if $x^{-\alpha-k}g(x)\to 0$, then $x^{-\alpha - k - 1} \int_0^x g(t) dt \to 0$ (see e.g. \cite[Chap. 1]{EstradaKanwal}).  

The above shows that the claim is true for $m=1$.  By using induction, the result follows.
\end{proof}

Unless otherwise stated, we assume from now on $\alpha \in \RR\backslash \{-1. -2, \dots\}$.\\

The following provides an alternative to Definition \ref{def3.1}.

Let $W \in \MM$ and $\gamma \in \CC.$ Then

\begin{equation}\label{eqC}
W(x) \sim \frac{\gamma x^\alpha}{\Gamma(\alpha + 1)} \quad (C), \quad x \to \infty
\end{equation}
 if and only if there exist $n \in \NN$ with $\alpha +n >0$, $g \in C_+(\RR)$, and $b>0$ such that $W(x) = D^ng(x)$ on $(b, \infty)$ and $g(x)/x^{\alpha+n}\, \to \,  \gamma/\Gamma(\alpha+n+1)$ as $x \to \infty$. We leave the verification of this fact to the reader.

\begin{theorem} \label{Thm1}
We have:
\begin{enumerate} 
\item[(a)]  If $f \in C_+(\mathbb{R})$ such that $f(x) \sim \gamma x^\alpha$ as $x \to \infty$, then $W_f(x) \sim \gamma x^\alpha \quad (C),$ $x \to \infty$.
\item[(b)]  Let $W \in \MM$ satisfy $(\ref{eqC})$.  Then,
\begin{equation}
\label{eqDW}
DW(x) \sim \frac{\alpha \gamma x^{\alpha -1}}{\Gamma(\alpha+1)} \quad (C), \quad x \to \infty .
\end{equation}
and
\begin{equation}
\label{eqxW}
 xW(x) \sim \frac{\gamma \,  x^{\alpha+1}}{\Gamma(\alpha + 1)}   \quad (C), \quad   x \to \infty.
\end{equation}
\end{enumerate}
\end{theorem}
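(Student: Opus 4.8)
The plan is to check each of the three relations directly against Definition~\ref{def3.1} (or, where it is more convenient, against its derivative form~(\ref{eqC})), keeping in mind that, by Theorem~\ref{aclaim}, one may replace a representative $f/H^{k}$ by $\frac{H^{m}\ast f}{H^{k+m}}$ without altering the constant $\gamma$, at the sole cost of raising the permitted degree of the polynomial.

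\emph{Part (a).} Write $W_{f}=\frac{H\ast f}{H}$, so that $W_{f}=g/H^{1}$ with $g:=H\ast f=\int_{0}^{\cdot}f(t)\,dt\in C_{+}(\RR)$, the polynomial in~(\ref{eqC1}) now being allowed to be an arbitrary constant. The hypothesis is $f(x)=\gamma x^{\alpha}+o(x^{\alpha})$ as $x\to\infty$, and I would invoke the classical Abelian theorem for integration of asymptotics \cite[Chap.~1]{EstradaKanwal}: if $\alpha>-1$ then $g(x)=\frac{\gamma}{\alpha+1}x^{\alpha+1}+o(x^{\alpha+1})$ and no constant is needed, while if $\alpha<-1$ then $f$ (being continuous near $0$ and $O(x^{\alpha})$ at infinity) lies in $L^{1}(0,\infty)$ and $g(x)=\int_{0}^{\infty}f+\frac{\gamma}{\alpha+1}x^{\alpha+1}+o(x^{\alpha+1})$, with the constant $\int_{0}^{\infty}f$ playing the role of the admissible degree-$0$ polynomial. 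Multiplying by $\Gamma(\alpha+2)=(\alpha+1)\Gamma(\alpha+1)$ turns either case into~(\ref{eqC1}) for $W_{f}$ at level $k=1$ with limit $\gamma\,\Gamma(\alpha+1)$, that is, $W_{f}(x)\sim\gamma x^{\alpha}\ (C)$.

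\emph{Part (b), the derivative.} This is essentially bookkeeping. If $W=f/H^{k}$ satisfies~(\ref{eqC1}) with a polynomial $p$ of degree at most $k-1$, then $DW=f/H^{k+1}$; and since $(\alpha-1)+(k+1)=\alpha+k$ and $(\alpha-1)+(k+1)+1=\alpha+k+1$, the very same estimate~(\ref{eqC1}), read now with exponent $\alpha-1$, level $k+1$, and the same polynomial $p$ (whose degree is at most $k-1$, hence at most $(k+1)-1$), holds verbatim. Since $\frac{\alpha\gamma x^{\alpha-1}}{\Gamma(\alpha+1)}=\frac{\gamma x^{\alpha-1}}{\Gamma(\alpha)}$ (using $\Gamma(\alpha+1)=\alpha\Gamma(\alpha)$), the limit required by Definition~\ref{def3.1} for this target is again $\gamma$, so~(\ref{eqDW}) follows. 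One may argue equally well through~(\ref{eqC}): if $W=D^{n}g$ on $(b,\infty)$ with $g(x)/x^{\alpha+n}\to\gamma/\Gamma(\alpha+n+1)$, then $DW=D^{n+1}g$ on $(b,\infty)$ with the same $g$ and with $n+1$ in place of $n$. The borderline value $\alpha=0$, where $\alpha-1=-1$ lies outside the admissible range, is dealt with by reading the right-hand side $0$ through the little-$o$ clause of Definition~\ref{def3.1}.

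\emph{Part (b), the product by $x$.} Here lies the real content. After enlarging $k$ via Theorem~\ref{aclaim} we may assume $k\ge2$ and $\alpha+k>0$, so that $xW=\frac{xf-kH\ast f}{H^{k}}$. I would compute $xf(x)-k\int_{0}^{x}f(t)\,dt$ by inserting the expansion $\Gamma(\alpha+k+1)f(x)=p(x)+\gamma x^{\alpha+k}+o(x^{\alpha+k})$, $\deg p\le k-1$, valid for large $x$. The polynomial part is the delicate point, and the main obstacle: the map $h\mapsto xh(x)-k\int_{0}^{x}h(t)\,dt$ carries a polynomial of degree at most $k-1$ again to a polynomial of degree at most $k-1$---the would-be degree-$k$ term cancels, exactly as in the identity established in the proof of Proposition~\ref{WOnInterval}(a)---so that, together with the bounded contribution of $f$ on a fixed finite interval, one gets a polynomial of degree at most $k-1$. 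The power term $\gamma x^{\alpha+k}$ contributes $\frac{\gamma}{\Gamma(\alpha+k+1)}x^{\alpha+k+1}$ through $xf$ and, by $\int_{0}^{x}t^{\alpha+k}\,dt=\frac{x^{\alpha+k+1}}{\alpha+k+1}$, the term $-\frac{k\gamma}{\Gamma(\alpha+k+2)}x^{\alpha+k+1}$ through $-k\int_{0}^{x}f$; since $\frac{1}{\Gamma(\alpha+k+1)}-\frac{k}{\Gamma(\alpha+k+2)}=\frac{\alpha+1}{\Gamma(\alpha+k+2)}$, these combine to $\frac{(\alpha+1)\gamma}{\Gamma(\alpha+k+2)}x^{\alpha+k+1}$. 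Finally, because $\alpha+k>0$, the remainder $o(x^{\alpha+k})$ produces $o(x^{\alpha+k+1})$ up to an additive constant, which is again absorbed into the polynomial. Collecting the three contributions and multiplying by $\Gamma(\alpha+k+2)$ puts $xf-kH\ast f$ into the shape demanded by Definition~\ref{def3.1} at exponent $\alpha+1$ and level $k$, with a polynomial of degree at most $k-1$ and limit $(\alpha+1)\gamma$; since $\frac{(\alpha+1)\gamma x^{\alpha+1}}{\Gamma(\alpha+2)}=\frac{\gamma x^{\alpha+1}}{\Gamma(\alpha+1)}$ (as $\Gamma(\alpha+2)=(\alpha+1)\Gamma(\alpha+1)$), this is precisely~(\ref{eqxW}).
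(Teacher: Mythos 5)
Your proposal is correct, and all three parts check out; the interesting comparison is in how you and the paper handle (\ref{eqxW}), where the arithmetic is the same but the organization differs. The paper first reduces to the case $\deg p\le n-2$ by subtracting $\beta x^{k-1}$ from the numerator (harmless because $x\delta=0$), and then splits $xV=\frac{xg}{H^{n}}-\frac{ng}{H^{n-1}}$, assigning each summand its own Ces\`aro behavior with constants $(\alpha+n+1)\gamma$ and $-n\gamma$, which add to $(\alpha+1)\gamma$. You instead work throughout with the single representative $\frac{xf-kH\ast f}{H^{k}}$ (after using Theorem \ref{aclaim} to make $k$ large, just as the paper could), and you dispose of the troublesome top coefficient by the explicit cancellation in $p\mapsto xp-k\int_{0}^{x}p$, i.e.\ the same identity as in Proposition \ref{WOnInterval}(a); your coefficient identity $\frac{1}{\Gamma(\alpha+k+1)}-\frac{k}{\Gamma(\alpha+k+2)}=\frac{\alpha+1}{\Gamma(\alpha+k+2)}$ plays exactly the role of the paper's two summands. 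What your route buys is a self-contained computation with no $x\delta=0$ trick and no preliminary degree reduction, at the cost of a bit more bookkeeping with the constants coming from the finite interval $[0,b]$; the paper's version buys a cleaner argument at the level of $\MM$ (a sum of two elements whose Ces\`aro asymptotics are already known) at the cost of the auxiliary reduction step. You also supply proofs of part (a) and of (\ref{eqDW}), which the paper dismisses as following from the definitions: your integration argument for (a), with $\int_{0}^{\infty}f$ serving as the admissible degree-zero polynomial when $\alpha<-1$, is precisely the content being glossed over, and your remark on the borderline case $\alpha=0$ in (\ref{eqDW}) (where the exponent $\alpha-1=-1$ falls outside the range allowed in Definition \ref{def3.1}) flags a wrinkle in the statement that the paper does not address either, so your reading of the right-hand side as a vanishing Ces\`aro limit is a reasonable resolution rather than a gap.
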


\begin{proof}  We only prove (\ref{eqxW}) since the other parts of the theorem follow from the definitions.  We first assume a stronger condition on the polynomial $p$. Suppose $\displaystyle V = \frac{g}{H^n} \in \MM \, \, (n \ge 2)$ with 
  \begin{equation} \label{gammaWithp} \frac{\Gamma(\alpha + n + 1) g(x) - p(x)}{x^{\alpha+n}} \to \gamma \quad \mbox{ as } x \to \infty,
  \end{equation}
for some polynomial $p$ with $\deg p \le n-2$. Then there exists a polynomial $q$ with $\deg q \le n-1$ such that 
$$\frac{\Gamma(\alpha+n+2)x g(x) - q(x)}{x^{\alpha+n+1}} \to (\alpha +n+1)\gamma \quad \mbox{ as }\quad x \to \infty.$$
Therefore,
$$\frac{xg}{H^n} \sim \frac{(\alpha + n + 1)\gamma x^{\alpha+1}}{\Gamma(\alpha + 2)} \quad (C),\quad \, \, x \to \infty.$$
Also, from (\ref{gammaWithp})
it follows that
$$\frac{-ng}{H^{n-1}} \sim \frac{-n \gamma x^{\alpha + 1}}{\Gamma(\alpha +2)} \quad (C), \quad \, \, x \to \infty.$$
Thus,
$$xV(x) \sim \frac{\gamma x^{\alpha + 1}}{\Gamma(\alpha+1)} \quad (C), \quad \, \, x \to \infty.$$

We now remove the stronger condition that $\deg p \le n-2$ and complete the proof of the theorem.  Let $W =\displaystyle \frac{f}{H^k} \in \MM$ such that (\ref{eqC}) holds.  That is, there exists $a_0, a_1, \dots, a_{k-1} \in \CC$ such that 
\begin{equation} \label{Existas} \frac{\Gamma(\alpha + k + 1) f(x) - (a_0+a_1 x + \dots + a_{k-1}x^{k-1})}{x^{\alpha+k}} \to \gamma \mbox{ \quad as } x \to \infty \end{equation}
Let $\displaystyle V = \frac{f(x) - \beta x^{k-1}}{H^k} \, \in \MM$, where $\beta = a_{k-1}/\Gamma(\alpha + k + 1)$. Then, from (\ref{Existas}) it follows that,
$$V(x) \sim \frac{\gamma x^\alpha}{\Gamma(\alpha+1)} \quad (C), \quad \, \, x \to \infty.$$
And, by the first part of the proof,
$$xV(x) \sim \frac{\gamma x^{\alpha +1}}{\Gamma(\alpha + 1)} \quad (C), \quad \, \, x \to \infty.$$
Since, $xW = xV + \beta (k-1)! x \delta = xV$, it follows that 
$$ xW(x) \sim \frac{\gamma x^{\alpha+1}}{\Gamma(\alpha+1)} \quad (C), \quad \, \, x \to \infty. $$
\end{proof}

By Theorem \ref{Thm1}, we obtain the following theorem.

\begin{theorem}   If $\displaystyle(H \ast W)(x) \sim \frac{\gamma \, x^\alpha}{\Gamma(\alpha + 1)}  \quad (C),$ then $\displaystyle(xW)(x) \sim \frac{\alpha \gamma \, x^\alpha}{\Gamma(\alpha + 1)}  \quad (C), \\  x \to \infty$.
\end{theorem}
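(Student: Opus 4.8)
The plan is to deduce this from Theorem~\ref{Thm1} together with one algebraic identity in $\MM$. Put $V=H\ast W$, so that, by hypothesis, $V$ satisfies $(\ref{eqC})$ with the given $\alpha$ and $\gamma$. If $W=\dfrac{f}{H^{k}}$, then $H\ast W=\dfrac{H\ast f}{H^{k}}$, whence $DV=D\!\left(\dfrac{H\ast f}{H^{k}}\right)=\dfrac{H\ast f}{H^{k+1}}=\dfrac{f}{H^{k}}=W$; combining this with property (c) of the Proposition in Section~\ref{preliminaries}, namely $D(xV)=V+xDV$, we obtain the exact identity
\[
xW=D(xV)-V \qquad\text{in }\MM .
\]
Note this holds on all of $\MM$, not merely on a half-line, so no localization argument is needed. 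It therefore suffices to determine the Ces\`aro asymptotics of each term on the right-hand side.

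First I would apply $(\ref{eqxW})$ to $V$: this gives $\displaystyle xV(x)\sim\frac{\gamma x^{\alpha+1}}{\Gamma(\alpha+1)}$ $(C)$, $x\to\infty$, which in the normalization of Definition~\ref{def3.1} reads
\[
xV(x)\sim\frac{(\alpha+1)\gamma\,x^{\alpha+1}}{\Gamma(\alpha+2)}\quad(C),\qquad x\to\infty .
\]
This is a legitimate Ces\`aro asymptotic since $\alpha+1\notin\{-1,-2,\dots\}$ whenever $\alpha\notin\{-1,-2,\dots\}$. Now apply $(\ref{eqDW})$ to $xV$, i.e. with exponent $\alpha+1$ and constant $(\alpha+1)\gamma$ in place of $\alpha$ and $\gamma$; using $\Gamma(\alpha+2)=(\alpha+1)\Gamma(\alpha+1)$ this yields
\[
D(xV)(x)\sim\frac{(\alpha+1)\cdot(\alpha+1)\gamma\,x^{\alpha}}{\Gamma(\alpha+2)}=\frac{(\alpha+1)\gamma\,x^{\alpha}}{\Gamma(\alpha+1)}\quad(C),\qquad x\to\infty .
\]

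Finally I would subtract. Ces\`aro asymptotics of a fixed order $x^{\alpha}$ are additive: given two elements satisfying $(\ref{eqC1})$, Theorem~\ref{aclaim} lets us pass to representatives over a common power $H^{k}$ (the relations $(\ref{eqC1})$ and the degree bound on the polynomial persisting), and then one simply adds the two defining relations. Applying this to $D(xV)$ and $-V$, and recalling $V(x)\sim\dfrac{\gamma x^{\alpha}}{\Gamma(\alpha+1)}$ $(C)$, we conclude
\[
xW(x)=D(xV)(x)-V(x)\sim\left(\frac{(\alpha+1)\gamma}{\Gamma(\alpha+1)}-\frac{\gamma}{\Gamma(\alpha+1)}\right)x^{\alpha}=\frac{\alpha\gamma\,x^{\alpha}}{\Gamma(\alpha+1)}\quad(C),\qquad x\to\infty ,
\]
which is the assertion.

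I do not expect a serious obstacle: once Theorem~\ref{Thm1} is in hand this is a few lines. The only points deserving care are (i) verifying the algebraic identity $xW=D(xV)-V$ and, in particular, that it is valid throughout $\MM$ rather than only locally; and (ii) the additivity of ``$\sim (C)$'' at a fixed growth order, together with keeping track of the two normalizations $\gamma x^{\beta}/\Gamma(\beta+1)$ at $\beta=\alpha$ and $\beta=\alpha+1$. Both are routine.
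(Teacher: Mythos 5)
Your proof is correct and takes essentially the paper's route: the paper proves this result simply by invoking Theorem \ref{Thm1}, and your argument is exactly a careful spelling-out of that deduction, via the identity $xW=D\bigl(x(H\ast W)\bigr)-H\ast W$ combined with (\ref{eqDW}), (\ref{eqxW}), and the (easily justified, as you note via Theorem \ref{aclaim}) additivity of Ces\`aro asymptotics of a fixed order. A small merit of your version is that differentiating $x(H\ast W)$ rather than $H\ast W$ itself avoids the inadmissible exponent $\alpha-1=-1$ in the case $\alpha=0$.
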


The proofs of the next proposition and corollary follow from the definitions.\\

\begin{proposition}  If $V$ has compact support, then $\displaystyle V(x) \sim \frac{0}{\Gamma(\alpha+1)} x^\alpha \ \ (C),$ $x \to \infty$.
\end{proposition}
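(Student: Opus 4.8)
The plan is to unwind the two definitions involved — ``$V$ has compact support'' and ``asymptotics in the Ces\`aro sense'' — and to observe that the conclusion is essentially immediate once a convenient representative of $V$ is fixed. Recall first that every element of $\MM$ vanishes on $(-\infty,0)$; hence the hypothesis that $V$ has compact support forces $\operatorname*{supp}V\subseteq[0,R]$ for some $R>0$, which is the same as saying that $V(x)=0$ on the open interval $(R,\infty)$.

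Next I would fix an arbitrary representation $V=\dfrac{f}{H^{k}}$ (which is legitimate, since Definition \ref{def3.1} only requires the existence of \emph{some} representative for which (\ref{eqC1}) holds). By the definition of vanishing on an open interval, there is a polynomial $p_{0}$ with $\operatorname*{deg}p_{0}\le k-1$ such that $f(x)=p_{0}(x)$ for all $x>R$. Under the standing assumption $\alpha\in\RR\setminus\{-1,-2,\dots\}$ the number $\Gamma(\alpha+k+1)$ is a well-defined nonzero constant (indeed $\alpha+k+1$ is never a nonpositive integer), so $p:=\Gamma(\alpha+k+1)\,p_{0}$ is again a polynomial with $\operatorname*{deg}p\le k-1$, and $\Gamma(\alpha+k+1)f(x)-p(x)=0$ for every $x>R$. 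Consequently the quotient appearing in (\ref{eqC1}) is identically zero for $x>R$ and therefore tends to $0$ as $x\to\infty$; by Definition \ref{def3.1} this is precisely the assertion that $V(x)\sim\dfrac{0}{\Gamma(\alpha+1)}x^{\alpha}$ $(C)$, $x\to\infty$.

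I do not anticipate any genuine obstacle: the statement is a straightforward bookkeeping consequence of the definitions, the only points deserving (minor) care being the freedom to choose the representative $f/H^{k}$, the identification of ``$V$ vanishes on $(R,\infty)$'' with the agreement of $f$ with a polynomial of degree at most $k-1$ on that interval, and the remark that multiplying such a polynomial by the nonzero scalar $\Gamma(\alpha+k+1)$ does not raise its degree. As an alternative one could invoke instead the equivalent description of (\ref{eqC}) in terms of $D^{n}g$ with $g\in C_{+}(\RR)$, choosing $g$ itself compactly supported (so that $g(x)/x^{\alpha+n}\to 0$ holds trivially), but the direct argument above is shorter and more transparent.
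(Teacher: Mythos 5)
Your argument is correct and is exactly what the paper intends: the paper simply states that this proposition "follows from the definitions," and your proof carries out precisely that unwinding (compact support gives $V(x)=0$ on $(R,\infty)$, so $f$ agrees with a polynomial of degree at most $k-1$ there, making the quotient in (\ref{eqC1}) eventually zero with $\gamma=0$). The points you flag for care — independence of the representative and that multiplying by the nonzero constant $\Gamma(\alpha+k+1)$ preserves the degree bound — are handled correctly.
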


Asymptotics in the Ces\`aro sense is a local property.

\begin{corollary}  Let $W, V \in  \MM$.  Suppose that $W$ has Ces\`{a}ro asymptotics $(\ref{eqC})$ and $W(x) = V(x)$ 
 on $(a, \infty)$.  Then,  $\displaystyle V(x) \sim \frac{\gamma \, x^\alpha}{\Gamma(\alpha + 1)}  \quad (C), \quad x \to \infty$.
\end{corollary}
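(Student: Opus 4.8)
The plan is to reduce everything to the alternative description of Ces\`aro asymptotics recorded just before Theorem \ref{Thm1} (the characterization via $W(x)=D^{n}g(x)$ on $(b,\infty)$), because in that formulation the relation $(\ref{eqC})$ manifestly depends only on the behavior of $W$ near $+\infty$. Concretely, since $W$ satisfies $(\ref{eqC})$, I would first fix $n\in\NN$ with $\alpha+n>0$, a function $g\in C_{+}(\RR)$, and $b>0$ with $W(x)=D^{n}g(x)$ on $(b,\infty)$ and $g(x)/x^{\alpha+n}\to\gamma/\Gamma(\alpha+n+1)$ as $x\to\infty$.

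The second step is to transport this representation to $V$. By hypothesis $W-V$ vanishes on $(a,\infty)$, and by the choice of $g$ the element $W-D^{n}g$ vanishes on $(b,\infty)$; writing $V-D^{n}g=(V-W)+(W-D^{n}g)$, I would invoke the elementary fact that a finite sum of elements of $\MM$, each vanishing on a common open interval, again vanishes on that interval. Hence $V(x)=D^{n}g(x)$ on $(c,\infty)$ with $c=\max\{a,b\}$. Since $n$, $g$ and $\gamma$ are unchanged, $V$ now meets the alternative description of $(\ref{eqC})$, and therefore $V(x)\sim \gamma x^{\alpha}/\Gamma(\alpha+1)$ $(C)$ as $x\to\infty$, which is the assertion.

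The only point requiring a little care — and which I would regard as the main, though still routine, obstacle — is the additivity of the relation ``vanishes on $(c,\infty)$''. If $U_{1}=f_{1}/H^{k_{1}}$ and $U_{2}=f_{2}/H^{k_{2}}$ vanish on $(c,\infty)$, one rewrites both over the common denominator $H^{k}$, $k=\max\{k_{1},k_{2}\}$, as $U_{i}=(H^{k-k_{i}}\ast f_{i})/H^{k}$; each numerator then agrees on $(c,\infty)$ with a polynomial of degree $\le k-1$ (the convolution of a polynomial of degree $\le k_{i}-1$ with $H^{k-k_{i}}$ is again a polynomial, of degree $\le k-1$), and the sum of these two polynomials witnesses that $U_{1}+U_{2}$ vanishes on $(c,\infty)$. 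An essentially equivalent alternative route is to note that $W-V$ vanishes on $(-\infty,0)\cup(a,\infty)$, hence has compact support, apply the preceding proposition to get $(W-V)(x)\sim 0\cdot x^{\alpha}/\Gamma(\alpha+1)$ $(C)$, and add this to the Ces\`aro asymptotics of $W$ using the (again routine) linearity of $(\ref{eqC})$ in $\gamma$.
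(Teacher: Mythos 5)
Your argument is correct, and it is a legitimate way of filling in what the paper simply declares to ``follow from the definitions'' (no proof of the corollary is actually given there). Two remarks on the comparison. First, your primary route goes through the alternative characterization stated just before Theorem \ref{Thm1} ($W=D^{n}g$ on $(b,\infty)$ with $g(x)/x^{\alpha+n}\to\gamma/\Gamma(\alpha+n+1)$); this makes locality transparent, but note that the paper leaves the verification of that characterization to the reader, so strictly speaking you are resting on another unproven (though paper-sanctioned) fact. The most economical definitional argument is in fact already contained in your auxiliary lemma: write $V=W+(V-W)$, bring both to a common denominator $H^{K}$, use Theorem \ref{aclaim} to keep (\ref{eqC1}) for the numerator of $W$, and absorb the numerator of $V-W$ (which agrees with a polynomial of degree $\le K-1$ for large $x$) into the polynomial $p$ of Definition \ref{def3.1}; your second suggested route (compact support of $W-V$, the preceding proposition, and linearity via Theorem \ref{aclaim}) is essentially this. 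Second, a small imprecision in your parenthetical justification of the additivity lemma: $H^{k-k_{i}}\ast f_{i}$ is not literally the convolution of $H^{k-k_{i}}$ with a polynomial, since $f_{i}$ coincides with the polynomial $p_{i}$ only on $(c,\infty)$. For $x>c$ one should split $\int_{0}^{x}(x-t)^{k-k_{i}-1}f_{i}(t)\,dt=\int_{0}^{c}(x-t)^{k-k_{i}-1}f_{i}(t)\,dt+\int_{c}^{x}(x-t)^{k-k_{i}-1}p_{i}(t)\,dt$; the first term is a polynomial in $x$ of degree $\le k-k_{i}-1$ and the second of degree $\le k-1$, so the stated conclusion (agreement with a polynomial of degree $\le k-1$ on $(c,\infty)$) is correct, but this extra term should be acknowledged. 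Neither point is a genuine gap.
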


\section{Applications}
\label{applications}
In this last section we give some Abelian and Tauberian theorems for Stieltjes and Laplace transforms of elements of $\mathcal{M}$.

We start by defining the Stieltjes transform \cite{Nem}. Let $r>-1$ and  suppose $\displaystyle W = \frac{f}{H^k} \in \MM$, where $x^{-r-k+\sigma}f(x)$ is {\it bounded as} $x \to \infty$ {\it for some} $\sigma >0$.  The Stieltjes transform of $W$ of index $r$ is given by 
$$\Lambda_r W(z) = (r+1)_k \int_0^\infty \, \frac{f(x)}{(x+z)^{r+k+1}} \, dx \, , \quad  z \in \CC \backslash(-\infty, 0],$$
\noindent where $(r+1)_k = \frac{\Gamma(r+k+1)}{\Gamma(r+1)} = (r+1)(r+2) \dots (r+k).$  Notice that $\Lambda_rW(z)$ is holomorphic in the variable $z$, as one readily verifies.\\

The following is a classical Abelian theorem for the Stieltjes transform.

\begin{theorem}[\cite{Carmichael}] \label{Carm}  If $f \in C_+(\RR)$ such that $x^{-\nu}f(x)\to A$ as $x \to \infty$, with $\nu > -1$, then for $\rho > \nu$,
$$\lim_{\stackrel{z \to \infty}{|\arg z| \le \theta < \pi/2}} \frac{z^{\rho - \nu}\, \Gamma(\rho+1) S_\rho f(z) }{\Gamma(\rho-\nu)\Gamma(\nu + 1)} \, = \, A,$$
\noindent where $S_\rho f(z) = \int_0^\infty \, \frac{f(x)}{(x+z)^{\rho+1}} \, dx.$\\
\end{theorem}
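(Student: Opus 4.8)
The plan is to prove Theorem~\ref{Carm} directly, by rescaling the Stieltjes kernel into a scale--invariant one and then passing to the limit under the integral sign; the material of Sections~\ref{localization}--\ref{cesaro} plays no role here, since $f$ and $S_\rho f$ are ordinary functions. First I would note that $S_\rho f(z)$ is well defined for $z\in\CC\setminus(-\infty,0]$: continuity of $f$ together with $x^{-\nu}f(x)\to A$ gives $|f(x)|\le C(1+x)^\nu$, while $|x+z|$ is bounded below on compact $x$--sets and $|x+z|\gtrsim x$ for $x$ large, so the defining integral converges absolutely because $\rho+1-\nu>1$. Writing $z=re^{i\varphi}$ with $r>0$, $|\varphi|\le\theta<\pi/2$, and substituting $x=ru$, one obtains the identity
\begin{equation}\label{eq:carm-sub}
\frac{z^{\rho-\nu}\,\Gamma(\rho+1)\,S_\rho f(z)}{\Gamma(\rho-\nu)\,\Gamma(\nu+1)}=\frac{\Gamma(\rho+1)\,e^{i(\rho-\nu)\varphi}}{\Gamma(\rho-\nu)\,\Gamma(\nu+1)}\int_0^\infty u^\nu\,\frac{f(ru)}{(ru)^\nu}\,\frac{du}{(u+e^{i\varphi})^{\rho+1}},
\end{equation}
and for each fixed $u>0$ the factor $f(ru)/(ru)^\nu$ tends to $A$ as $r\to\infty$, so the integrand converges pointwise to $A\,u^\nu(u+e^{i\varphi})^{-\rho-1}$.

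Next I would set up a dominating function that does not depend on $\varphi\in[-\theta,\theta]$. Since $\cos\varphi\ge0$, one has $|u+e^{i\varphi}|^2=u^2+2u\cos\varphi+1\ge u^2+1\ge\tfrac{1}{2}(1+u)^2$, hence $|u+e^{i\varphi}|^{-\rho-1}\le 2^{(\rho+1)/2}(1+u)^{-\rho-1}$. I would then split the integral at $u=1/r$. On $\{u>1/r\}$ one has $ru>1$, so $|f(ru)/(ru)^\nu|\le M:=\sup_{x\ge1}|x^{-\nu}f(x)|<\infty$, and there the integrand is dominated by the fixed function $M\,2^{(\rho+1)/2}u^\nu(1+u)^{-\rho-1}$, which lies in $L^1(0,\infty)$ precisely because $\nu>-1$ (integrability at $0$) and $\rho>\nu$ (integrability at $\infty$). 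On $\{0<u<1/r\}$ one has $|u+e^{i\varphi}|\ge1$, so that piece contributes at most $r^{-\nu}\int_0^{1/r}|f(ru)|\,du\le r^{-\nu-1}\sup_{[0,1]}|f|$, which tends to $0$ as $r\to\infty$. Thus the neighborhood of the origin, where the convergence $f(ru)/(ru)^\nu\to A$ is \emph{not} uniform, causes no trouble.

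It remains to identify the limiting constant. Set $\Phi(w)=\int_0^\infty u^\nu(u+w)^{-\rho-1}\,du$, holomorphic on $\{\operatorname{Re}w>0\}$; the substitution $u=ts$ at a real point $w=t>0$ gives $\Phi(t)=t^{\nu-\rho}B(\nu+1,\rho-\nu)=t^{\nu-\rho}\,\Gamma(\nu+1)\Gamma(\rho-\nu)/\Gamma(\rho+1)$, so by the identity theorem $\Phi(e^{i\varphi})=e^{i(\nu-\rho)\varphi}\,\Gamma(\nu+1)\Gamma(\rho-\nu)/\Gamma(\rho+1)$. Combining this with \eqref{eq:carm-sub}, dominated convergence shows that for each fixed $\varphi$ the left--hand side of \eqref{eq:carm-sub} tends to $A$: the phase $e^{i(\rho-\nu)\varphi}$ exactly cancels that of $\Phi(e^{i\varphi})$, which is why the normalization in the statement is the correct one. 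To upgrade this to convergence as $z\to\infty$ \emph{uniformly} in the sector, I would argue by contradiction. If it failed there would be $r_n\to\infty$ and $\varphi_n\in[-\theta,\theta]$ along which the left--hand side of \eqref{eq:carm-sub} stays at distance $\ge\varepsilon$ from $A$; passing to a subsequence with $\varphi_n\to\varphi_\ast$ and applying dominated convergence to \eqref{eq:carm-sub} once more (the dominant constructed above is free of $\varphi$, and $e^{i\varphi_n}\to e^{i\varphi_\ast}$), one still obtains the limit $A$, a contradiction.

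The change of variables, the absolute--convergence bookkeeping, and the evaluation of the Beta integral are all routine. The step I expect to be the real obstacle is the interchange of limit and integral in \eqref{eq:carm-sub}: since $f(ru)/(ru)^\nu$ is close to $A$ only when $ru$ is large, not when $u$ is small, one must excise the shrinking interval $(0,1/r)$ and estimate its contribution separately, and one must keep the dominating function independent of $\arg z$ so that the sectorial limit (rather than merely the radial one) follows from the compactness of $[-\theta,\theta]$.
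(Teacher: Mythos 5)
Your proof is correct, but note that the paper itself offers no argument for Theorem \ref{Carm}: it is imported as a classical result with a citation to Carmichael, so there is no in-paper proof to match. Your self-contained derivation is sound and is essentially the standard classical argument: the substitution $x=ru$ with $z=re^{i\varphi}$ turns the normalized Stieltjes transform into $\frac{\Gamma(\rho+1)e^{i(\rho-\nu)\varphi}}{\Gamma(\rho-\nu)\Gamma(\nu+1)}\int_0^\infty u^{\nu}\,\frac{f(ru)}{(ru)^{\nu}}\,(u+e^{i\varphi})^{-\rho-1}\,du$; the bound $|u+e^{i\varphi}|^{2}=u^{2}+2u\cos\varphi+1\ge\frac{1}{2}(1+u)^{2}$ gives a dominant independent of $\varphi\in[-\theta,\theta]$ that is integrable exactly under $\nu>-1$ and $\rho>\nu$; the excision of $(0,1/r)$, where the ratio $f(ru)/(ru)^{\nu}$ need not be controlled, is handled correctly and its contribution is $O(r^{-\nu-1})$; and the Beta-integral evaluation extended to $w=e^{i\varphi}$ by the identity theorem correctly produces the phase $e^{i(\nu-\rho)\varphi}$ cancelling the prefactor, which is precisely why the constant in the statement is right. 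The only stylistic remark is that your final compactness-and-contradiction step is avoidable: since your dominating function and all error bounds (the $(0,1/r)$ piece and the tail estimate obtained by splitting at $u=R/r$ where $|x^{-\nu}f(x)-A|<\varepsilon$ for $x\ge R$) are uniform in $\varphi$, one gets the sectorial limit directly; but your argument as written is also valid. Within the paper's economy, quoting Carmichael is all that is needed, whereas your route buys a self-contained proof at the cost of a page of routine dominated-convergence bookkeeping.
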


\begin{theorem}  Let $W \in \MM$ and $r > -1$.  Suppose that $\displaystyle W(x) \sim \frac{\gamma x^\alpha}{\Gamma(\alpha+1)} \quad (C), \quad x \to \infty$.  Then:
\begin{enumerate}
\item[(i)] If $r > \alpha > -1$, then $\Lambda_r W(z)$ is well-defined and has asymptotic behavior
$$\lim_{\stackrel{z \to \infty}{|\arg z| \le \theta < \pi/2}} \frac{z^{r-\alpha}\, \Gamma(r+1) \Lambda_r W(z)}{\Gamma(r-\alpha)} \, = \, \gamma,$$
\item[(ii)]  If $\alpha <-1, \, \alpha \notin \{-2, -3, -4, \dots\}$, then $\Lambda_r W(z)$ is well-defined and there are constants $A_1, \dots, A_k$ such that 
\begin{equation} \label{Limitgamma}  \lim_{\stackrel{z \to \infty}{|\arg z| \le \theta < \pi/2}} \frac{z^{r-\alpha}\, \Gamma(r+1)}{\Gamma(r-\alpha)} \left[ \Lambda_r W(z) - \sum_{j=1}^k \frac{A_j}{z^{r+j}} \right] \, = \,  \gamma. \end{equation} \end{enumerate}
\end{theorem}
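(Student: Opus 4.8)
The plan is to reduce both parts to the classical Abelian theorem for the Stieltjes transform, Theorem \ref{Carm}, by choosing a convenient representative $W=f/H^{k}$ and peeling off from $f$ an explicit polynomial together with a compactly supported remainder; the polynomial is what will produce the correction terms $A_{j}$ in part (ii), while the remainder and the subtracted polynomial are negligible in part (i).

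First I would fix the representative. By Definition \ref{def3.1} there are $k\in\NN$ and a polynomial $p$ with $\deg p\le k-1$ such that $W=f/H^{k}$ and $\Gamma(\alpha+k+1)f(x)=p(x)+\gamma x^{\alpha+k}+o(x^{\alpha+k})$ as $x\to\infty$; hence $f(x)=O(x^{\max(k-1,\alpha+k)})$. Since $r>-1$ and, in both parts, $r>\alpha$ (in (i) because $r>\alpha>-1$, in (ii) because $\alpha<-1<r$), we get $\max(k-1,\alpha+k)<r+k$, so $x^{-r-k+\sigma}f(x)$ is bounded for small $\sigma>0$; therefore $\Lambda_{r}W$ is well-defined and, as is readily checked by integration by parts, independent of the representative. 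In part (ii) I would moreover enlarge $k$: passing to $W=(H^{m}\ast f)/H^{k+m}$ and invoking Theorem \ref{aclaim}, I may and do assume the chosen representative already satisfies $\alpha+k>-1$, the polynomial in (\ref{eqC1}) still having degree at most $k-1$. I keep the letter $k$ for this index; it is the one appearing in the statement. In part (i) no enlargement is needed, since there $\alpha+k>-1$ automatically.

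Next comes the decomposition. Fix a continuous cut-off $\chi$ with $\chi(x)=0$ for $x\le 1/2$ and $\chi(x)=1$ for $x\ge 1$, and set $\psi(x)=\chi(x)\bigl(f(x)-p(x)/\Gamma(\alpha+k+1)\bigr)$. Then $\psi\in C_{+}(\RR)$ and $\psi(x)\sim\gamma x^{\alpha+k}/\Gamma(\alpha+k+1)$ as $x\to\infty$, with $\nu:=\alpha+k>-1$; moreover on $(0,\infty)$ one has $f=\psi+p/\Gamma(\alpha+k+1)+\rho_{0}$ with $\rho_{0}$ continuous and supported in $(0,1]$. Writing $p(x)=\sum_{j=0}^{k-1}c_{j}x^{j}$ and using $\Lambda_{r}W(z)=(r+1)_{k}\int_{0}^{\infty}f(x)(x+z)^{-r-k-1}dx$, this gives
$$\Lambda_{r}W(z)=(r+1)_{k}S_{r+k}\psi(z)+\sum_{i=1}^{k}A_{i}z^{-r-i}+(r+1)_{k}\int_{0}^{1}\frac{\rho_{0}(x)}{(x+z)^{r+k+1}}\,dx ,$$
where the middle sum arises from $\int_{0}^{\infty}x^{j}(x+z)^{-r-k-1}dx=\frac{\Gamma(j+1)\Gamma(r+k-j)}{\Gamma(r+k+1)}z^{j-r-k}$ (a Beta integral, legitimate because $r+k-j\ge r+1>0$) after reindexing $i=k-j$, so that $A_{i}=\dfrac{c_{k-i}\,\Gamma(k-i+1)\,\Gamma(r+i)}{\Gamma(r+1)\,\Gamma(\alpha+k+1)}$. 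To the first term I would apply Theorem \ref{Carm} with $\nu=\alpha+k$, $A=\gamma/\Gamma(\alpha+k+1)$, $\rho=r+k>\nu$; rearranging the Gamma factors and using $(r+1)_{k}=\Gamma(r+k+1)/\Gamma(r+1)$ yields $z^{r-\alpha}(r+1)_{k}S_{r+k}\psi(z)\to\gamma\Gamma(r-\alpha)/\Gamma(r+1)$ uniformly for $|\arg z|\le\theta<\pi/2$. For the last term, $|x+z|\ge|z|$ on that sector when $x\ge 0$, so it is $O(|z|^{-r-k-1})$, hence $z^{r-\alpha}$ times it is $O(|z|^{-(\alpha+k+1)})\to 0$ because $\alpha+k+1>0$.

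Assembling: in part (i), $z^{r-\alpha}A_{i}z^{-r-i}=A_{i}z^{-(\alpha+i)}\to 0$ since $\alpha+i\ge\alpha+1>0$, so the polynomial sum also drops out and $z^{r-\alpha}\Gamma(r+1)\Lambda_{r}W(z)/\Gamma(r-\alpha)\to\gamma$, which is (i). In part (ii) the polynomial sum is exactly $\sum_{j=1}^{k}A_{j}z^{-r-j}$; subtracting it and repeating the computation gives $z^{r-\alpha}\Gamma(r+1)\bigl[\Lambda_{r}W(z)-\sum_{j=1}^{k}A_{j}z^{-r-j}\bigr]/\Gamma(r-\alpha)\to\gamma$, which is (\ref{Limitgamma}). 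The step that needs care is precisely the enlargement of $k$ in part (ii): one must arrange $\alpha+k>-1$, both so that Theorem \ref{Carm} applies to $\psi$ and so that the compactly supported remainder is negligible after multiplication by $z^{r-\alpha}$; with a representative having $\alpha+k<-1$ the Stieltjes transform of the power part $\psi$ would itself carry extra power-type corrections and the clean count of exactly $k$ constants $A_{j}$ would break down. The remaining ingredients — keeping $\psi$ inside $C_{+}(\RR)$ while extracting the polynomial, the Beta-integral evaluation, the uniform sectorial bound on the remainder, and the bookkeeping of Gamma-function constants — are routine.
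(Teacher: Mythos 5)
Your proof is correct and takes essentially the same route as the paper: choose a representative with $\alpha+k>-1$, split off the polynomial part of $f$, apply the classical Abelian theorem (Theorem \ref{Carm}) to the asymptotic piece, and evaluate the polynomial contributions by the Beta integral, which yields constants $A_i=(k-i)!\,(r+1)_{i-1}a_{k-i}$ agreeing with the paper's. The only difference is that you spell out details the paper leaves implicit, namely the reduction to $\alpha+k>-1$ via Theorem \ref{aclaim} and the cut-off treatment of the compactly supported remainder.
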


\begin{proof} 
 (i) Let $\alpha > -1$ and $\displaystyle W = \frac{f}{H^k} \in \MM$ such that 
$$ \frac{\Gamma(\alpha+k+1) f(x)}{x^{\alpha+k}} \to \gamma \quad \mbox{ as } x \to \infty.$$
It follows that for $r > \alpha, f(x)x^{-r-k+\sigma}$ is bounded as $x \to \infty$ for some $ \sigma > 0$. Now, by substituting $r+k$ for $\rho$, $\alpha + k$ for $\nu$, and $\frac{\gamma}{\Gamma(\alpha + k + 1)}$ for $A$ in the above classical Abelian theorem, we obtain
$$\lim_{\stackrel{z \to \infty}{|\arg z| \le \theta < \pi/2}} \frac{z^{r - \alpha} {\Gamma(r+k+1) S_{r+k} f(z)} }{\Gamma(r-\alpha)} \, = \, \gamma.$$
Now, using the fact that $\Lambda_r W(z) = (r+1)_k S_{r+k} f(z)$, the result follows.

(ii) Suppose that $\displaystyle W = \frac{f}{H^k} \in \MM$ with $k+\alpha >-1$ and that $f$ can be written as 
$f(x) = p(x) + g(x),$
where $g \in C_+(\RR)$ satisfies 
$\lim_{x \to \infty}x^{-\alpha - k} g(x) =\gamma/\Gamma(\alpha + k + 1)$
and $p(x) = \sum_{j=0}^{k-1} a_j x^j$.  It follows that $|f(x)| \le Cx^{k-1}$ for some constant $C$ and thus $f(x)x^{-r-k+\sigma}$ is bounded for any $0 < \sigma \le 1+r$. Observe that 
\begin{align*}
S_{r+k}\,f(z) &= S_{r+k}\,g(z) + \sum_{j=0}^{k-1}a_j \int_0^\infty \frac{x^j}{(x+z)^{r+k+1}} \, dx\\
& = S_{r+k}\,g(z) + \sum_{j=0}^{k-1} \frac{j! \Gamma(r+k-j)}{\Gamma(r+k+1)z^{j+k+r}} \, \, a_j.
\end{align*}
By Theorem \ref{Carm}, we have that $z^{r-\alpha}S_{r+k} \, g(z) \to \frac{\gamma \Gamma(r-\alpha)}{\Gamma(r+k+1)}$ as $z \to \infty$ on sectors $|\arg z| \le \theta < \frac{\pi}{2}$.  Since $\Lambda_rW(z) = (r+1)_k S_{r+k}\,f(z)$, we obtain (\ref{Limitgamma}) with $A_j = (k-j)!(r+1)_{j-1} a_{k-j}$.
\end{proof}

We illustrate our ideas with the ensuing example, a deduction of Stirling's formula for the Gamma function.

\begin{example}{\bf (Stirling's formula)}  Recall that the digamma function $\psi$ is defined as the logarithmic derivative of $\Gamma$.  By using the product formula for $\Gamma$, namely,
$$
\Gamma(z)= \frac{e^{- \gamma z}}{z} \prod_{n=1}^{\infty} \left(1+\frac{z}{n}\right)^{-1}e^{\frac{z}{n}},
$$
one has
$$\psi(z) = \frac{\Gamma'(z)}{\Gamma(z)} = -\gamma + \sum_{n=0}^\infty \left( \frac{1}{n+1} - \frac{1}{n+z} \right), \quad z \in \CC \backslash \{0,-1,-2, \dots\},$$
where $\gamma$ is the Euler-Mascheroni constant. We define $\displaystyle W = \frac{f}{H^2}$, where $f(x) = \int_0^x \left( \lfloor t \rfloor - t+ \frac{1}{2}  \right) dt$ (here $\lfloor x \rfloor$ stands for the integer part of $x$). 

Set $g(x) = \lfloor x \rfloor - x + \frac{1}{2}$ and note that $g$ is periodic with period 1, $|g(x)| \le \frac{1}{2}$ for all $x \in \RR$, and $\int_n^{n+1} g(x) dx = 0$ for all $n \in \NN$.  This implies that $\left| \int_0^x g(t) dt \right| \le \frac{1}{2}$ for all $x \ge 0$.  Consequently, $W(x) \sim 0 \cdot x^{-\sigma} \quad (C), \quad x \to \infty$ for any $0 < \sigma < 2$.  Theorem 4 now yields (from the proof it is clear that the constants $A_1 = A_2 = 0$ in this case because $f$ is bounded)
\begin{equation} \label{LimitZero}   \lim_{\stackrel{z \to \infty}{|\arg z| \le \theta < \pi/2}}z^\sigma \Lambda_0 W(z) = 0, \end{equation} 
for any $0<\sigma<2$.  From now on we will work with $1 < \sigma <2$.  We compute an explicit expression for $ \Lambda_0 W(z)$,
\begin{align*}
\Lambda_0 W(z) & = 2 \int_0^\infty \frac{f(x)}{(x+z)^3}\, dx = \int_0^\infty \frac{(\lfloor x \rfloor - x + \frac{1}{2} )}{(x+z)^2} \, dx
=
\frac{1}{2z} + \lim_{N \to \infty} \int_0^N \frac{\lfloor x \rfloor - x}{(x+z)^2} \, dx
\\
&
= \frac{1}{2z} + \lim_{N \to \infty} \left( \int_0^N \frac{d \lfloor x \rfloor}{x+z} - \frac{N}{N+z} + \frac{N}{N+z} - \int_0^N \frac{dx}{x+z} \right)\\
&=\ln z + \frac{1}{2z} + \lim_{N \to \infty} \sum_{n=1}^N \frac{1}{n+z} - \ln N\\
&=
\ln z + \frac{1}{2z} - \psi(z).
\end{align*}
The limit (\ref{LimitZero}) then yields
\begin{equation} \label{psi} \psi(z) = \ln z + \frac{1}{2z} + o \left(\frac{1}{z^\sigma} \right), \quad \, \, z \to \infty, \end{equation} for $z$ in the sectors $|\arg z| \le \theta < \frac{\pi}{2}$.  Note that integration of (\ref{psi}) implies for any $0 < \tau < 1$, 
$$\ln \Gamma(z) = z(\ln z - 1) + \frac{1}{2} \ln z + C + o \left( \frac{1}{z^\tau} \right), \quad z \to \infty$$
on $|\arg z| \le \theta < \frac{\pi}{2}$, which is Stirling's asymptotic formula for the Gamma function except for the evaluation of the constant $C$.  The constant is of course well known to be $C = \sqrt{2\pi}$.  We refer to \cite[p. 43]{EstradaKanwal} for an elementary proof of the latter fact. 
\end{example}
\bigskip

We now consider the Laplace transform \cite{AtanNem}. If $\displaystyle W = \frac{f}{H^k} \in \MM$, where $f(x) e^{-\sigma x}$ {\it is bounded as } $x \to \infty$ {\it for some } $ \sigma \in \RR$, then the Laplace transform of $W$ is given by
$$ \LL W(z) = z^k \int_0^\infty \, e^{-zx} \,f(x) \, dx, \, \, \Re e\: z > \sigma.$$

\begin{theorem}  Let $W  \in \MM$.  Assume that  $\displaystyle W(x) \sim \frac{\gamma x^\alpha}{\Gamma(\alpha + 1)} \quad (C), \, \, x \to \infty$.  Then $W$ is Laplace transformable and
\begin{enumerate}
\item[(i)] If $\alpha > -1$, then 
$$\lim_{\stackrel{z \to 0}{|\arg z| \le \theta < \pi/2}}z^{\alpha + 1} \LL W(z) = \gamma.$$
\item[(ii)]  If $\alpha <-1, \, \alpha \notin \{-2, -3, \dots\}$, then there are constants $A_0, \dots, A_{k-1}$ such that \begin{equation} \label{Laplacegamma} \lim_{\stackrel{z \to 0}{|\arg z| \le \theta < \pi/2}}\, z^{\alpha + 1}\left( \LL W(z) - \sum_{j=0}^{k-1} A_j z^j \right) = \gamma \end{equation} \end{enumerate}
\end{theorem}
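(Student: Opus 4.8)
The plan is to reduce the statement to a classical Abelian theorem for the ordinary Laplace transform and then push it through the operational calculus, exactly as was done for the Stieltjes transform in the preceding theorem. The key auxiliary fact I would establish first is: \emph{if $g\in C_{+}(\RR)$ satisfies $g(x)\sim A x^{\nu}$ as $x\to\infty$ with $\nu>-1$, then $\widetilde{g}(z):=\int_{0}^{\infty}e^{-zx}g(x)\,dx$ converges for $\Re e\, z>0$ and}
$$
\lim_{\stackrel{z\to 0}{|\arg z|\le\theta<\pi/2}} z^{\nu+1}\,\widetilde{g}(z)=A\,\Gamma(\nu+1).
$$
To prove this, write $g(x)=Ax^{\nu}+h(x)$ with $h(x)=o(x^{\nu})$, and use the identity $\int_{0}^{\infty}e^{-zx}x^{\nu}\,dx=\Gamma(\nu+1)z^{-\nu-1}$ (valid for $\Re e\, z>0$ by analytic continuation from the positive real axis). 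For the remainder, fix $\varepsilon>0$, pick $M$ with $|h(x)|\le\varepsilon x^{\nu}$ for $x\ge M$, and split $\int_{0}^{\infty}e^{-zx}h(x)\,dx$ at $M$: the part over $[0,M]$ is bounded by a constant, hence $o(z^{-\nu-1})$ since $\nu+1>0$ forces $|z^{-\nu-1}|\to\infty$ as $z\to 0$, while $\bigl|\int_{M}^{\infty}e^{-zx}h(x)\,dx\bigr|\le\varepsilon\int_{0}^{\infty}e^{-(\Re e\, z)x}x^{\nu}\,dx=\varepsilon\,\Gamma(\nu+1)(\Re e\, z)^{-\nu-1}$. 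The sector restriction enters precisely here, through $\Re e\, z\ge|z|\cos\theta$, which turns the last bound into $\varepsilon\,\Gamma(\nu+1)(\cos\theta)^{-\nu-1}|z|^{-\nu-1}$; since $\varepsilon$ is arbitrary, the claim follows.

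Granting this lemma, part (i) is immediate. When $\alpha>-1$, Definition \ref{def3.1} together with the Remark following it shows that the hypothesis means $W=f/H^{k}$ for some $k$ with $\Gamma(\alpha+k+1)f(x)/x^{\alpha+k}\to\gamma$; in particular $f$ grows at most polynomially, so $f(x)e^{-\sigma x}$ is bounded for every $\sigma>0$, $W$ is Laplace transformable, and $\LL W(z)=z^{k}\widetilde{f}(z)$ for $\Re e\, z>0$. The lemma with $\nu=\alpha+k$ and $A=\gamma/\Gamma(\alpha+k+1)$ gives $z^{\alpha+k+1}\widetilde{f}(z)\to\gamma$, and multiplying by $z^{k}$ yields $z^{\alpha+1}\LL W(z)\to\gamma$.

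For part (ii) I would pass, via Theorem \ref{aclaim}, to a representation $W=f/H^{k}$ with $k$ so large that $\alpha+k>-1$, and then write $f=p+g$ on $[0,\infty)$ exactly as in the proof of the previous theorem: $p(x)=\sum_{j=0}^{k-1}a_{j}x^{j}$ and $g\in C_{+}(\RR)$ with $x^{-\alpha-k}g(x)\to\gamma/\Gamma(\alpha+k+1)$. Again $f$ has polynomial growth, so $W$ is Laplace transformable, and by linearity
$$
\LL W(z)=z^{k}\sum_{j=0}^{k-1}a_{j}\int_{0}^{\infty}e^{-zx}x^{j}\,dx+z^{k}\widetilde{g}(z)=\sum_{j=0}^{k-1} a_{j}\,j!\,z^{k-1-j}+z^{k}\widetilde{g}(z).
$$
The first sum is a polynomial in $z$; reindexing by $i=k-1-j$ gives $\sum_{i=0}^{k-1}A_{i}z^{i}$ with $A_{i}=(k-1-i)!\,a_{k-1-i}$. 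Applying the lemma to $g$ with $\nu=\alpha+k>-1$ gives $z^{\alpha+k+1}\widetilde{g}(z)\to\gamma$, whence $z^{\alpha+1}\bigl(\LL W(z)-\sum_{i=0}^{k-1}A_{i}z^{i}\bigr)=z^{\alpha+k+1}\widetilde{g}(z)\to\gamma$, which is (\ref{Laplacegamma}). The only genuine obstacle is the uniformity of the auxiliary Abelian theorem over the sector $|\arg z|\le\theta<\pi/2$; everything else is routine bookkeeping, and that uniformity is exactly what the inequality $\Re e\, z\ge|z|\cos\theta$ delivers. One could instead quote the sectorial Abelian theorem for the Laplace transform from the literature, but the self-contained estimate above keeps the treatment elementary, in the spirit of the paper.
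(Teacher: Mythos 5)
Your proof is correct and follows essentially the same route as the paper: part (i) reduces to the sectorial Abelian theorem for the ordinary Laplace transform (which the paper simply cites from Doetsch, whereas you prove it directly via the split at $M$ and the bound $\Re e\, z \ge |z|\cos\theta$), and part (ii) uses the same decomposition $f=p+g$ (with $\alpha+k>-1$ secured via Theorem \ref{aclaim}) and the explicit transform of the polynomial part. As a minor remark, your value $A_i=(k-1-i)!\,a_{k-1-i}$ is the correct one; the paper's stated $A_j=\alpha_{k-j-1}/(k-j-1)!$ contains a slip (division instead of multiplication by the factorial), which does not affect the statement.
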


\begin{remark}  In the terminology of finite part limits (\cite[Sect. 2.4]{EstradaKanwal}), the limit (\ref{Laplacegamma}) might be rewritten as 
$$\mbox{F.p.} \lim_{\stackrel{z \to 0}{|\arg z| \le \theta < \pi/2}} z^{\alpha + 1} \LL W(z) = \gamma.$$
\end{remark}

\begin{proof} 
(i) Let $\alpha > -1$ and  $\displaystyle W = \frac{f}{H^k} \in \MM$ such that
$$\frac{\Gamma(\alpha+k+1) f(x)}{x^{\alpha+k}} \to \gamma \quad \mbox{ as } x \to \infty.$$
It follows that there exists $\sigma > 0$  such that $f(x)e^{-\sigma x}$ is bounded as  $x \to \infty$. Now, by a well known classical Abelian theorem for the Laplace transform \cite{Doetsch}, we obtain
$$\lim_{\stackrel{z \to 0}{|\arg z| \le \theta < \pi/2}}\frac{z^{\alpha + k + 1} \LL f(z)}{\Gamma(\alpha + k + 1)}  = \frac{\gamma}{\Gamma(\alpha +  k + 1)} \, .$$
So, 
$$\hspace{.75in} \lim_{\stackrel{z \to 0}{|\arg z| \le \theta < \pi/2}}z^{\alpha + 1} \LL W(z) = \lim_{\stackrel{z \to 0}{|\arg z| \le \theta < \pi/2}} z^{\alpha + k + 1} \LL f(z) = \gamma .$$

\noindent (ii)  Suppose $\displaystyle W = \frac{f}{H^k} \in \MM$ with $k + \alpha > -1$ and that $f$ can be written as $f(x) = p(x) + g(x), $
where $g \in C_+(\RR)$ has asymptotic behavior $\lim_{x \to \infty} x^{-\alpha-k}g(x) = \gamma/\Gamma(\alpha+k+1)$ and $p(x) = \sum_{j=0}^{k-1} \alpha_j x^j$.  Note that $W = W_1+W_2$, where $\displaystyle W_1 = \frac{g}{H^k}$ and $\displaystyle W_2 = \frac{p}{H^k}$. Exactly as above, one verifies that 
$$\lim_{\stackrel{z \to 0}{|\arg z| \le \theta < \pi/2}}z^{\alpha + 1} \LL W_1(z) = \gamma.$$
It remains to observe that 
$\LL W_2(z) = \sum_{j=0}^{k-1} A_j z^j,$ with $A_j = \alpha_{k-j-1}/(k-j-1)!$.
\end{proof}

\begin{theorem} \label{Tauberian}  {\bf (Tauberian Theorem)}  Let $\displaystyle W= \frac{f}{H^k} \in \MM$,  where in addition to $f \in C_+(\RR)$, $f$ is real-valued and nonnegative.  If $\LL W(s) \sim \gamma s^{-\alpha - 1}, \, s \to 0^+$ (for some $\alpha > -1$), then $\displaystyle W(x) \sim \frac{\gamma x^\alpha}{\Gamma(\alpha+1)} \quad (C), \, x \to \infty$.
\end{theorem}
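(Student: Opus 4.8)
The plan is to reduce this Tauberian statement to Karamata's classical Tauberian theorem for the Laplace transform (it is the Tauberian converse to the Abelian theorem for $\LL$ proved above, under the additional nonnegativity assumption on $f$). By the definition of $\LL$ we have $\LL W(s)=s^{k}\int_{0}^{\infty}e^{-sx}f(x)\,dx$, so the hypothesis $\LL W(s)\sim\gamma s^{-\alpha-1}$ as $s\to 0^{+}$ reads
$$\int_{0}^{\infty}e^{-sx}f(x)\,dx\ \sim\ \gamma\,s^{-(\alpha+k+1)},\qquad s\to 0^{+};$$
note that $\gamma\geq 0$ automatically, since the left-hand side is nonnegative for every $s>0$.

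The next step is to bring in the function $F(x):=(H\ast f)(x)=\int_{0}^{x}f(t)\,dt$, which lies in $C_{+}(\RR)$ (indeed in $C^{1}_{+}(\RR)$) and is nondecreasing on $[0,\infty)$ because $f\geq 0$; the integral displayed above is precisely its Laplace--Stieltjes transform $\int_{0}^{\infty}e^{-sx}\,dF(x)$. Applying Karamata's Tauberian theorem with exponent $\rho=\alpha+k+1$ (which is positive since $\alpha>-1$) then gives
$$F(x)=\int_{0}^{x}f(t)\,dt\ \sim\ \frac{\gamma\,x^{\alpha+k+1}}{\Gamma(\alpha+k+2)},\qquad x\to\infty.$$
For $\gamma>0$ this is the usual statement of Karamata's theorem; in the borderline case $\gamma=0$ one only needs the elementary one-sided estimate $\int_{0}^{x}f(t)\,dt\leq e\int_{0}^{\infty}e^{-t/x}f(t)\,dt$, which together with $\int_{0}^{\infty}e^{-sx}f(x)\,dx=o(s^{-\rho})$ yields $F(x)=o(x^{\rho})$.

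It remains to feed this back into Definition \ref{def3.1}. Using the representative $W=\dfrac{H\ast f}{H^{k+1}}=\dfrac{F}{H^{k+1}}$, condition (\ref{eqC1}) with $k+1$ and $F$ in the roles of $k$ and $f$ asks for
$$\frac{\Gamma(\alpha+k+2)\,F(x)-p(x)}{x^{\alpha+k+1}}\ \longrightarrow\ \gamma,\qquad x\to\infty,$$
for some polynomial $p$ with $\deg p\leq k$. Since $\alpha>-1$ forces $\alpha+k+1>k$, any such polynomial is $o(x^{\alpha+k+1})$ (this is the Remark following Definition \ref{def3.1}), so it may be dropped, and what is left is exactly the asymptotics for $F$ established in the previous step. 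Hence $W(x)\sim\dfrac{\gamma\,x^{\alpha}}{\Gamma(\alpha+1)}\ (C)$ as $x\to\infty$. One may also reach the same conclusion through the alternative characterization (\ref{eqC}), by observing that $W=D^{k+1}W_{F}$ while $F(x)/x^{\alpha+k+1}\to\gamma/\Gamma(\alpha+k+2)$.

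The single genuine obstacle is the clean use of Karamata's theorem: one has to recognize the Laplace--Stieltjes transform of the nondecreasing function $F$ inside the hypothesis, treat the borderline case $\gamma=0$ via the elementary estimate above rather than through the slowly varying refinement of the theorem, and check that the exponent $\rho=\alpha+k+1$ is positive so that the theorem applies. Everything else is routine bookkeeping with the representatives of $W$ and the unraveling of Definition \ref{def3.1}.
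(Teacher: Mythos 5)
Your proof is correct and follows essentially the same route as the paper: pass from $\LL W$ to $\LL f$, view it as the Laplace--Stieltjes transform of the nondecreasing function $H\ast f$, apply the Hardy--Littlewood--Karamata theorem, and read off the Ces\`aro asymptotics from the representative $W=\dfrac{H\ast f}{H^{k+1}}$. Your extra care with the borderline case $\gamma=0$ (via the elementary one-sided estimate) is a small refinement the paper leaves implicit, not a different argument.
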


\begin{proof}
It follows that
\begin{equation}
 \label{Tauberian1}
   \LL f(s) \sim \gamma s^{-(\alpha + k + 1)}, \quad s \to 0^+. 
		\end{equation}                                                          
Also,
\begin{equation} \label{Tauberian2}
  \LL f(s) = \int_0^\infty e^{-st} d(H \ast f)(t), \quad \, s>0.   
\end{equation}
Since $H \ast f$ is nondecreasing , by (\ref{Tauberian1}) and (\ref{Tauberian2}) and Hardy-Littlewood-Karamata Tauberian Theorem \cite{BingGoldTeug,vdz}, it follows that
$$(H \ast f)(x) \sim \frac{\gamma}{\Gamma(\alpha + k + 2)} \, x^{\alpha + k + 1}, \quad \,\,  x \to \infty.$$
That is,
$$ \frac{\Gamma(\alpha + (k+1) + 1)(H \ast f)(x)}{x^{\alpha+(k+1)}} \to \gamma, \quad \, \, x \to \infty.$$   
Since $\displaystyle W = \frac{H \ast f}{H^{k+1}}$, the above yields $\displaystyle W(x) \sim \frac{\gamma x^\alpha}{\Gamma(\alpha+1)} \quad (C), \, \, x \to \infty.   $     \end{proof}


\begin{thebibliography}{99}
\bibitem{AtanNem} D. Atanasiu, D. Nemzer, \emph{Extending the Laplace transform},  Math. Student \textbf{77} (2008), 203--212.

\bibitem{BingGoldTeug} N. Bingham, C. Goldie, J. Teugels, \emph{Regular variation}, Cambridge University Press, Cambridge, 1987.

\bibitem{Carmichael}  R. D. Carmichael, \emph{Abelian theorems for the Stieltjes transform of functions}, Bull. Cal. Math. Soc. {\bf 68} (1976), 49--52.

\bibitem{Doetsch} G. Doetsch, \emph{Theorie der Laplace transformation}, Band I, Verlag Birkh\"{a}user, Basel, 1950.

\bibitem{Estrada} R. Estrada, \emph{The Ces\`aro behaviour of distributions}, Proc. R. Soc. Lond. A \textbf{454} (1998), 2425--2443.

\bibitem{EstradaKanwal} R. Estrada, R. P. Kanwal, \emph{A distributional approach to asymptotics. Theory and applications,} Birkh\"{a}user Boston, Inc., Boston, MA, 2002.

\bibitem{Estrada-Vindas} R. Estrada, J. Vindas, \emph{On Tauber's second Tauberian theorem,} Tohoku Math. J. (2) \textbf{64} (2012), 539--560.

\bibitem{Mikusinski} J. Mikusi\'{n}ski, \emph{Operational calculus,} Vol. 1, International Series of Monographs in Pure and Applied Mathematics, 109, Pergamon Press, Oxford; PWN---Polish Scientific Publishers, Warsaw, 1983. 

\bibitem{ML} O. P. Misra, J. L. Lavoine, \emph{Transform analysis of generalized functions,}
North-Holland Publishing Co., Amsterdam, 1986.

\bibitem{Nem} D. Nemzer, \emph{Mikusi\'{n}ski's operational calculus approach to the distributional Stieltjes transform}, Adv. Math., Sci. J. \textbf{2} (2013), 35--42.

\bibitem{PilStanTak} S. Pilipovi\'{c}, B. Stankovi\'{c}, A. Taka\v{c}i, \emph{Asymptotic behaviour and Stieltjes transformation of distributions}, Teubner-Texte zur Mathematik, Leipzig, 1990.

\bibitem{PilStanVindas} S. Pilipovi\'{c}, B. Stankovi\'{c}, J. Vindas, \emph{Asymptotic behavior of generalized functions,} Series on analysis, applications and computation -- Vol. 5, World Scientific Publishing Co. Pte. Ltd., Hackensack, NJ, 2012.

\bibitem{yosida} K. Yosida, \emph{Operational calculus. A theory of hyperfunctions,} Springer-Verlag, New York, 1984. 

\bibitem{vdz}V. S. Vladimirov, Yu. N. Drozzinov, B. I. Zavialov, \emph{Tauberian theorems for generalized functions,} Kluwer Academic Publishers Group, Dordrecht, 1988.
\end{thebibliography}
\end{document}